\newtheorem{theorem}{Theorem}[section]
\newtheorem{lemma}[theorem]{Lemma}
\newtheorem{proposition}[theorem]{Proposition}
\newtheorem{corollary}[theorem]{Corollary}
\theoremstyle{definition}
\newtheorem{example}[theorem]{Example}
\newtheorem{definition}[theorem]{Definition}
\newtheorem*{definition*}{Definition}
\newtheorem*{theorem*}{Theorem}
\newtheorem*{proposition*}{Proposition}
\newtheorem*{corollary*}{Corollary}
\newtheorem{conjecture}[theorem]{Conjecture}
\newcommand{\Z}{\mathbb{Z}}
\renewcommand{\P}{\mathcal{P}}
\newcommand{\sig}{\mathrm{sig}}
\author{Yibo Gao}
\address{Department of Mathematics, Massachusetts Institute of Technology, \mbox{Cambridge, MA 02139}}
\email{\href{mailto:gaoyibo@mit.edu}{{\tt gaoyibo@mit.edu}}}
\author{Kaarel H\" anni}
\address{Department of Mathematics, Massachusetts Institute of Technology, \mbox{Cambridge, MA 02139}}
\email{\href{mailto:kaarelh@mit.edu}{{\tt kaarelh@mit.edu}}}
\begin{document}
\title{Counting Signed Vexillary Permutations}
\date{\today}
\subjclass[2010]{Primary 05A05}

\begin{abstract}
We show that the number of signed permutations avoiding 1234 equals the number of signed permutations avoiding 2143 (also called vexillary signed permutations), resolving a conjecture by Anderson and Fulton. The main tool that we use is the generating tree developed by West. Many further directions are mentioned in the end.
\end{abstract}
\maketitle

\section{Introduction}\label{sec:intro}
Permutation pattern avoidance has been a popular line of research for many years. Denote the symmetric group on $n$ elements by $S_n$. We say that a permutation $w\in S_n$ \textit{avoids} a pattern $\pi\in S_k$ if there do not exist indices $1\leq a_1<\cdots<a_k\leq n$ such that $w(a_i)<w(a_j)$ if and only if $\pi(i)<\pi(j)$. Let $S_n(\pi)$ denote the set of permutations $w\in S_n$ that avoid $\pi$. Two permutations $\pi,\pi'$ are called \textit{Wilf equivalent} if $|S_n(\pi)|=|S_n(\pi')|$ for all $n$. The study of the growth rate of $|S_n(\pi)|$ and the study of nontrivial Wilf equivalence classes have been fruitful. One of the most famous examples of Wilf equivalence classes is all permutation patterns of length 3. Specifically, for any $\pi\in S_3$, $|S_n(\pi)|=C_n$, the $n^{th}$ Catalan number.

Likewise, the set of permutations avoiding 1234 and the set of permutations avoiding 2143 have been traditionally well-studied and enjoy nice combinatorial properties. Their permutation matrices are shown in Figure~\ref{fig:matrix1234and2143}. 
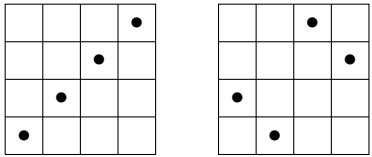
\begin{figure}[h!]
\centering
\begin{tikzpicture}[scale=0.5]
\draw(0,0)--(4,0)--(4,4)--(0,4)--(0,0);
\draw(1,0)--(1,4);
\draw(2,0)--(2,4);
\draw(3,0)--(3,4);
\draw(0,1)--(4,1);
\draw(0,2)--(4,2);
\draw(0,3)--(4,3);
\node at (0.5,0.5) {$\bullet$};
\node at (1.5,1.5) {$\bullet$};
\node at (2.5,2.5) {$\bullet$};
\node at (3.5,3.5) {$\bullet$};
\end{tikzpicture}
\qquad
\begin{tikzpicture}[scale=0.5]
\draw(0,0)--(4,0)--(4,4)--(0,4)--(0,0);
\draw(1,0)--(1,4);
\draw(2,0)--(2,4);
\draw(3,0)--(3,4);
\draw(0,1)--(4,1);
\draw(0,2)--(4,2);
\draw(0,3)--(4,3);
\node at (1.5,0.5) {$\bullet$};
\node at (0.5,1.5) {$\bullet$};
\node at (2.5,3.5) {$\bullet$};
\node at (3.5,2.5) {$\bullet$};
\end{tikzpicture}
\caption{Permutations 1234 and 2143.}
\label{fig:matrix1234and2143}
\end{figure}

A permutation $w$ avoids 1234 if and only if its shape under RSK has at most 3 columns, by Greene's theorem (see for example \cite{stanley1999ec2}). With some further tools in the theory of symmetric functions, 1234-avoiding permutations can be enumerated as follows: $$|S_n(1234)|=\frac{1}{(n+1)^2(n+2)}\sum_{j=0}^n{2j\choose j}{n+1\choose j+1}{n+2\choose j+1}.$$ This enumeration appeared in many previous works including \cite{gessel1990symmetric}, \cite{gessel1998lattice} and \cite{bousquet2002four} and is now an exercise in chapter 7 of \cite{stanley1999ec2}. A permutation that avoids 2143 is called \textit{vexillary}. Vexillary permutations can also be characterized as the permutations whose Rothe diagram, up to a permutation of its rows and columns, is the diagram for a partition \cite{manivel2001symmetric}. Moreover, their associated Schubert polynomials are flag Schur functions \cite{manivel2001symmetric}. West \cite{west1995generating} showed that $|S_n(1234)|=|S_n(2143)|$ for any $n$, so 1234 and 2143 are Wilf equivalent.

The notion of Wilf equivalence can be naturally generalized to signed permutations. The signed permutation group $B_n\simeq(\Z/2\Z)\wr S_n\simeq(\Z/2\Z)^n\rtimes S_n$, also known as the Weyl group of type $B_n$ or $C_n$, consists of permutations $w$ on $\{-n,\ldots,-1,1,\ldots,n\}$ such that $w(i)=-w(-i)$ for all $i\in\{-n,\ldots,-1,1,\ldots,n\}$. We say that $w\in B_n$ \textit{avoids} $\pi\in S_k$ if the natural embedding of $w$ into $S_{2n}$ avoids $\pi$ in the sense of permutation pattern avoidance. For example, $w\in B_2$ given by $w(-2)=1$, $w(-1)=-2$, $w(1)=2$, $w(2)=-1$ contains $231$ and does not contain $123$, as the natural embedding $B_2\to S_{4}$ sends $w$ to $3142\in S_4$, and $3142$ contains $231$ and does not contain $123$. As a warning, this definition of pattern containment is \textbf{not} equivalent to a Weyl group element $w$ of type $B$ avoiding a type $A$ pattern, in the sense of root system pattern avoidance defined by Billey and Postnikov \cite{billey2005smoothness} to study the smoothness of Schubert varieties. 

In particular, let us define
\begin{align*}
B_n(1234)=\{&w\in B_n\ |\ \text{there do not exist}\ -n\leq a<b<c<d\leq n\ \\
&\text{such that}\ w(a)<w(b)<w(c)<w(d)\},\\
B_n(2143)=\{&w\in B_n\ |\ \text{there do not exist}\ -n\leq a<b<c<d\leq n\ \\
&\text{such that}\ w(b)<w(a)<w(d)<w(c)\},
\end{align*}
i.e., the set of signed permutations avoiding 1234 and the set of signed permutations avoiding 2143 respectively, which are the main objects of interest in this paper.

Analogously, $B_n(1234)$ and $B_n(2143)$ have very nice properties. In particular, the enumeration result
$$|B_n(1234)|=\sum_{j=0}^n{n\choose j}^2C_j$$
where $C_j={2j\choose j}/(j+1)$ is the $j^{th}$ Catalan number, is given by Egge \cite{egge2010enumerating}, using techniques involving RSK and jeu-de-taquin. Geometric and combinatorial properties of signed permutations avoiding 2143, which are also called \textit{vexillary signed permutations}, are studied by Anderson and Fulton \cite{anderson2018vexillary}. They conjectured that $|B_n(1234)|=|B_n(2143)|$. The main result of this paper is to answer this conjecture positively.

In fact, there are more similarities between the structures of signed permutations avoiding 1234 and signed permutations avoiding 2143. For $0\leq j\leq n$, and $\pi\in\{1234,2143\}$, define $$B_n^j(\pi):=\{w\in B_n(\pi)\ |\ w(i)>0\ \text{for exactly }j\ \text{indices }i\in \{1,\ldots,n\}\}.$$
\begin{theorem}\label{thm:main}
For $j\leq n$, $|B_n^j(1234)|=|B_n^j(2143)|$.
\end{theorem}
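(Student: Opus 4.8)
\medskip

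\noindent\textbf{Setting up the trees.} The plan is to realise $\bigcup_n B_n(\pi)$, for $\pi$ equal to $1234$ or to $2143$, as a generating tree in the style of West \cite{west1995generating}, and to identify the two trees as labelled trees while keeping track of the sign statistic. Given $\tilde w\in B_{n+1}(\pi)$, delete the unique pair of entries $\{\,n+1,\,-(n+1)\,\}$ of largest absolute value (equivalently, restrict the embedded permutation to the entries of absolute value at most $n$); since deleting entries preserves pattern avoidance, this produces a well-defined parent $w\in B_n(\pi)$, and every $\tilde w$ arises from a unique parent. Conversely the children of $w$ are obtained by choosing one of the $n+1$ gaps of the one-line word $(w(1),\dots,w(n))$ together with a sign $\epsilon\in\{+,-\}$, inserting $\epsilon(n+1)$ into that gap (the mirror entry $-\epsilon(n+1)$ in the left half being then forced by central symmetry), and retaining exactly those choices for which $\tilde w\in B_{n+1}(\pi)$. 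A child obtained with $\epsilon=+$ has one more positive entry in its right half, so $j(\tilde w)=j(w)+1$; a child obtained with $\epsilon=-$ has $j(\tilde w)=j(w)$. Hence, if the two generating trees can be identified by an isomorphism sending $+$-edges to $+$-edges and $-$-edges to $-$-edges, the induced bijection $B_n(1234)\xrightarrow{\ \sim\ }B_n(2143)$ preserves $j$, which is Theorem~\ref{thm:main}; summing over $j$ recovers the Anderson--Fulton conjecture \cite{anderson2018vexillary}. Since in both cases the root is the unique element of $B_0$, it suffices to exhibit a common labelled succession rule.

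\medskip

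\noindent\textbf{Active sites and labels.} To obtain a succession rule one must show that the set of valid (``active'') gap--sign pairs, and the labels of the corresponding children, depend only on a bounded amount of data attached to $w$. The key point is that $+(n+1)$ is the global maximum and $-(n+1)$ the global minimum, so in any occurrence of $1234$ or $2143$ that an insertion creates, $+(n+1)$ must play the role of the largest value and $-(n+1)$ the role of the smallest. Unwinding this, a gap--sign pair fails to be active exactly when some short increasing subsequence (in the $1234$ case) or some inversion lying below a later larger entry (in the $2143$ case) occurs inside one of a few windows of the word $x=(w(-n),\dots,w(-1),w(1),\dots,w(n))$ determined by the gap; because $x$ is centrally symmetric, all of this can be read off from $(w(1),\dots,w(n))$ alone. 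I would then take the label of $w$ to record the number of active $+$-sites, the number of active $-$-sites, and the finitely many further integer parameters needed to predict how these quantities change after an insertion --- the parameters already present in West's description of the unsigned class $S_n(\pi)$, together with a few more tracking the behaviour near the centre of $x$. The central computation is then to establish the succession rule: for each value of the label the multiset of (label, sign) pairs of the children is determined, and --- this is the step demanding a careful but finite case analysis --- it is literally the same multiset whether one forbids $1234$ or $2143$.

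\medskip

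\noindent\textbf{Matching the rules, and the main obstacle.} For the unsigned classes West already proved that $S_n(1234)$ and $S_n(2143)$ have the same generating tree; the task is to promote this to the signed, $j$-graded statement. I would (i) check that the roots carry equal labels; (ii) observe that the $+$-insertion part of the rule is controlled by the ``right-half'' parameters exactly as in West's unsigned rule, so these parts agree with no extra work; and (iii) carry out the genuinely new analysis of the $-$-insertion part, where the mirror entry $-(n+1)$ enters, and show that it is governed by the ``near-centre'' parameters in the same way for both patterns. An induction on $n$ then yields $|B_n^j(1234)|=|B_n^j(2143)|$ for all $j\le n$. I expect the principal obstacle to be precisely the coupling imposed by central symmetry: inserting a lone new maximum, as in West's setting, is classical, but here the maximum $n+1$ and the minimum $-(n+1)$ are inserted simultaneously in mirror positions, linking the two halves of $x$, so one must verify both that the relevant information still fits into a label of bounded size and that the $-$-site succession rule it produces is identical for the two forbidden patterns. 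That verification is the technical heart of the proof.
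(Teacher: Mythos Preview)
Your plan hinges on the two edge-signed generating trees admitting a common succession rule, equivalently on their being isomorphic as rooted trees with $\{+,-\}$-labelled edges. They are not. For $w\in B_n$ the sign sequence of the unique root-to-$w$ path is $(\epsilon_1,\dots,\epsilon_n)$ with $\epsilon_k=+$ iff $w^{-1}(k)>0$; an edge-preserving isomorphism would force, for every fixed sign sequence, the number of vertices with that sequence to agree for the two patterns. But already for the sequence $(+,-,+,-)$ the counts differ: among the $24$ elements $w\in B_4$ with $w(\{1,2,3,4\})=\{1,-2,3,-4\}$, exactly $12$ avoid $1234$ while only $9$ avoid $2143$ (both are quick to check by hand). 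Hence no labelling---however many ``near-centre'' parameters you append to West's---can make the multisets of (label, sign) children literally coincide, and step~(iii) of your outline cannot be carried out.

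The paper anticipates exactly this obstruction (it notes in the introduction that the relevant trees are ``far from isomorphic'') and proceeds differently. It fixes $j$ from the start: the root of $BT^j(\pi)$ is the unique element of $B_j^j(\pi)$, and every subsequent insertion places a new maximum into the top-left quadrant only, so no $\pm$ choice is ever made. The resulting trees for $1234$ and $2143$ still carry genuinely different succession rules (Propositions~\ref{prop:suc2143} and~\ref{prop:suc1234}), so equality of level sizes is not visible at the tree level. The comparison instead passes through an auxiliary statistic on paths, the \emph{signature}: the generating functions $F^{1234}(k,q,\gamma)$ and $F^{2143}(k,q,\gamma)$ counting paths by length with prescribed signature satisfy separate recursions (Lemmas~\ref{lem:2143gen} and~\ref{lem:1234gen}), and a short induction (Lemma~\ref{lem:2143gen=1234gen}) shows they coincide. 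Summing over signatures gives $|B_n^j(1234)|=|B_n^j(2143)|$. In short, the missing idea is that one must abandon the hope of a tree isomorphism and instead match the trees only after refining by signature and passing to generating functions.
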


By summing over $j\in [n]$, we obtain the aforementioned conjecture by Anderson and Fulton \cite{anderson2018vexillary} as a corollary.
\begin{corollary}\label{cor:main}
For $n\in\Z_{\geq1}$, \[|B_n(2143)|=|B_n(1234)|=\sum_{j=0}^n{n\choose j}^2C_j.\]
\end{corollary}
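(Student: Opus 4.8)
Since summing the identity of Theorem~\ref{thm:main} over $0\le j\le n$ gives $|B_n(1234)|=|B_n(2143)|$, and since $|B_n(1234)|=\sum_{j}\binom nj^2C_j$ by Egge~\cite{egge2010enumerating}, it suffices to prove Theorem~\ref{thm:main}. The plan is to realize both families $\bigsqcup_n B_n(1234)$ and $\bigsqcup_n B_n(2143)$ as generating trees, adapting West's construction \cite{west1995generating} to the hyperoctahedral group, where in addition every edge is coloured by a sign in $\{+,-\}$, and then to show that the two sign-coloured trees are isomorphic; reading off the vertices at depth $n$ having a prescribed number of $+$-edges on the path to the root yields the refined count.

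First I would fix the tree structure. For $w'\in B_{n+1}$, let its \emph{parent} be the element of $B_n$ obtained by deleting the entry of absolute value $n+1$ together with its mirror image and relabelling positions and values order-preservingly; under the embedding $B_m\hookrightarrow S_{2m}$ this is exactly deletion of the largest and smallest values and their (mutually mirror) positions, so it carries $B_{n+1}(\pi)$ into $B_n(\pi)$ for $\pi\in\{1234,2143\}$. The children of $w\in B_n(\pi)$ are then the $w'\in B_{n+1}(\pi)$ obtained by reinserting $\pm(n+1)$: a choice of one of the $n+1$ gaps among the positive positions, together with a sign $\varepsilon$ for the value placed there, the mirror gap and the value $-\varepsilon(n+1)$ being forced; colour the edge by $\varepsilon$. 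Because a $+$-reinsertion creates a new positive position carrying a positive value while a $-$-reinsertion creates a new positive position carrying a negative value, the value of $j$ at a depth-$n$ vertex equals the number of $+$-edges on its path to the root (which is the empty permutation). Hence $|B_n^j(\pi)|$ is the number of such vertices, and Theorem~\ref{thm:main} reduces to an isomorphism of sign-coloured generating trees.

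Next I would compute the two succession rules. Following West's treatment of $S_n(1234)$ and $S_n(2143)$, I would label each $w\in B_n(\pi)$ by a bounded amount of combinatorial data recording which reinsertions are \emph{active} (i.e.\ keep $\pi$ avoided), separately for $\varepsilon=+$ and $\varepsilon=-$: for $1234$ this is governed by where an increasing subsequence of length three first appears when $w$ is read across all of $-n,\dots,-1,1,\dots,n$, and for $2143$ by the analogous question for the ``$21$''-prefix and ``$3$''-suffix that a reinserted maximum would complete, with additional bookkeeping because the witnessing entries, and the mirror copy of the inserted entry, may straddle the origin. The key lemma for each $\pi$ is that this label together with $\varepsilon$ determines the multiset of (label, edge-colour) pairs of the children, so that the coloured tree depends only on the root label and the succession rule; the roots carry the same label for both patterns. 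Finally I would check that the two succession rules, together with the root data, coincide — the signed refinement of West's classical computation — which completes Theorem~\ref{thm:main} and hence Corollary~\ref{cor:main}.

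The step I expect to be the crux is the active-site analysis for $2143$ in $B_n$: unlike $1234$, the pattern $2143$ is not monotone, so a reinserted maximum can only play the role of its ``$4$'', and even in $S_n$ deciding whether the required ``$21$'' lies to its left and a large enough ``$3$'' to its right needs care; in $B_n$ these witnesses, as well as the mirror copies of the inserted pair $\pm(n+1)$, may cross the origin, and one must control their interaction in order to keep the label set bounded and the succession rule deterministic. Matching the resulting $2143$-label with the $1234$-label inherited from West is then the final, and most delicate, point; the $1234$ side, being purely about increasing subsequences and their mirror images, should be comparatively mechanical.
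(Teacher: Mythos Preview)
Your reduction of the corollary to Theorem~\ref{thm:main} plus Egge's formula is exactly what the paper does. The issue is entirely in your proposed proof of Theorem~\ref{thm:main}.

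Your plan hinges on the claim that the two sign-coloured generating trees (built by inserting $\pm(n{+}1)$ with a sign) will turn out to be \emph{isomorphic}, with matching succession rules. This is precisely the step that fails in the paper's analysis. The paper builds closely related generating trees $BT^j(\pi)$ (one for each fixed $j$, inserting only into the top-left quadrant in increasing order of values) and writes down explicit succession rules for both patterns (Propositions~\ref{prop:suc2143} and~\ref{prop:suc1234}); these rules are \emph{visibly different}, and the authors state outright that ``the generating trees for $B_n^j(1234)$ and $B_n^j(2143)$ turn out to be far from isomorphic.'' They then have to pass to a further invariant (the ``signature'' of the associated lattice path) and prove equality via a generating-function recursion (Lemma~\ref{lem:2143gen=1234gen}); the tree isomorphism you are hoping for simply is not there. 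Your tree is organised differently from theirs, but there is no reason to expect it to behave better: indeed, the paper quotes Anderson and Fulton's observation that West's $S_n(1234)\leftrightarrow S_n(2143)$ bijection does \emph{not} preserve the reverse-complement involution, which is exactly the obstruction to upgrading West's tree isomorphism to a sign-coloured one in the way you propose.

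There is also a gap one level earlier: you assert that for each $\pi$ the active-site data can be packaged into a bounded label so that the succession rule depends only on that label and the sign $\varepsilon$. In your setup both $+(n{+}1)$ and $-(n{+}1)$ are inserted simultaneously at mirror positions, so a forbidden $1234$ or $2143$ can use one or both of them together with entries on either side of the origin; the constraints are not governed by a single ``first ascent/descent'' position as in West's $S_n$ analysis. You flag the $2143$ case as the crux, but even for $1234$ the analysis is not ``comparatively mechanical'': a $+$-insertion at positive position $i$ is blocked not only by a $123$ to its left but also by any ascent strictly between positions $-i$ and $i$ (which, together with the inserted $-(n{+}1)$ and $+(n{+}1)$, forms a $1234$), and this second condition is not captured by West's two statistics. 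So as it stands the proposal is a plan whose decisive step---producing matching bounded-label succession rules---is both unexecuted and, on the evidence of the paper, unlikely to go through without the additional generating-function argument of Section~\ref{sec:proof}.
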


As pointed out by Christian Gaetz, Theorem~\ref{thm:main} also implies a direct analogue of Corollary \ref{cor:main} for type $D$. Recall that the Weyl group of type $D_n$ can be realized as an order 2 subgroup of the Weyl group of type $B_n$. Specifically,
$$D_n:=\{w\in B_n\ |\ w(i)<0\text{ for an even number of indices }i\in\{1,\ldots,n\}\}.$$
We then say that $w\in D_n$ avoids a pattern $\pi$ if $w\in D_n\subset B_n$ avoids $\pi$. And as an analogous notation, let $D_n(\pi)$ denote the set of $w\in D_n$ that avoids $\pi$. Similarly, elements in $D_n(2143)$ are called \textit{vexillary} in type $D$ \cite{billey1998vexillary}. By summing the equality in Theorem \ref{thm:main} over $j\in [n]$ with $n-j$ even, we obtain the analogous enumeration result in type $D$.
\begin{corollary}
For $n\in\mathbb{Z}_{\geq1}$, $|D_n(1234)|=|D_n(2143)|$.
\end{corollary}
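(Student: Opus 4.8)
\emph{Proof proposal.} The plan is to realize $D_n(\pi)$ as a disjoint union of the graded pieces $B_n^j(\pi)$ appearing in Theorem~\ref{thm:main}, and then read off the equality termwise. First I would unwind the definitions: pattern avoidance is a property of the embedding $w\hookrightarrow S_{2n}$, so for the subgroup $D_n\subset B_n$ one simply has $D_n(\pi)=D_n\cap B_n(\pi)$. Next, for $w\in B_n$ let $j$ denote the number of indices $i\in\{1,\dots,n\}$ with $w(i)>0$; since no $w(i)$ is zero, exactly $n-j$ of these indices satisfy $w(i)<0$, so $w\in D_n$ if and only if $n-j$ is even. Hence, for each $\pi\in\{1234,2143\}$,
\[
D_n(\pi)\;=\;\bigsqcup_{\substack{0\le j\le n\\ n-j\ \text{even}}} B_n^j(\pi),
\]
and taking cardinalities gives $|D_n(\pi)|=\sum_j |B_n^j(\pi)|$, the sum ranging over $0\le j\le n$ with $n-j$ even.

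Then I would apply Theorem~\ref{thm:main}, which asserts $|B_n^j(1234)|=|B_n^j(2143)|$ for every $j\le n$, in particular for every $j$ of the required parity. Summing these equalities over the relevant values of $j$ yields $|D_n(1234)|=|D_n(2143)|$, which is the claim.

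There is essentially no obstacle here beyond Theorem~\ref{thm:main} itself: the only thing to check is the elementary bookkeeping in the first paragraph, namely that the type $D$ condition ``$w$ has an even number of negative values among $w(1),\dots,w(n)$'' selects exactly the graded pieces with $n-j$ even, and that avoidance is inherited by $D_n\subset B_n$ so that $D_n(\pi)=D_n\cap B_n(\pi)$. Both are immediate from the definitions, and all the real content lives in Theorem~\ref{thm:main}; this corollary merely records that the known $B_n$ refinement, when restricted to a fixed parity class of sign counts, still matches up between the two patterns.
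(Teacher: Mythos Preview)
Your proposal is correct and follows exactly the paper's approach: the paper simply states that the corollary follows by summing the equality in Theorem~\ref{thm:main} over those $j\in[n]$ with $n-j$ even, and you have spelled out precisely this parity bookkeeping.
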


The main tool that we use in the proof of Theorem~\ref{thm:main} is the idea of generating trees developed by West \cite{west1995generating} to show that $|S_n(1234)|=|S_n(1243)|=|S_n(2143)|$. A generating tree is a rooted labeled tree for which the label at a vertex determines its descendants (their number and their labels). The generating trees considered by West have vertices that correspond to permutations avoiding a fixed pattern $\pi$. The descendants of a vertex corresponding to the permutation $w\in S_n$ correspond to permutations formed by inserting a new largest element $n+1$ to some location in $w$ (so that $\pi$ is still avoided). The usefulness of such generating trees stems in part from the fact that it is often possible to present an isomorphic tree with vertices labeled by only a few permutation statistics, with a simple enough succession rule to be fit for further analysis. In the case of $S_n(1234)$ versus $S_n(2143)$, West was able to find a simple description of both trees and observed that the two are naturally isomorphic, thus proving $|S_n(1234)|=|S_n(2143)|$ bijectively. 

There are two main difficulties in proving the simple-looking theorem (Theorem~\ref{thm:main}). First, as pointed out by Anderson and Fulton \cite{anderson2018vexillary}, the bijection between $S_n(1234)$ and $S_n(2143)$ provided by West \cite{west1995generating} does not preserve whether the permutation equals its reverse complement or not, suggesting a more careful choice of statistics for the generating trees, described in Section~\ref{sec:suc}. Second, the generating trees for $B_n^j(1234)$ and $B_n^j(2143)$ turn out to be far from isomorphic unlike the case of $S_n(1234)$ versus $S_n(2143)$ so we finish the proof by using certain generating functions in Section~\ref{sec:proof}.
We end in Section~\ref{sec:open} with discussion on open problems.
\section{Generating trees for 1234 and 2143 avoiding permutations}\label{sec:suc}

We will start working towards an explicit generating tree for $B^j_n(1234)$ and $B^j_n(2143)$. Throughout the section, a signed permutation $w\in B_n$ should be visualized by a point graph, where the $x$-axis corresponds to the indices ${-}n,{-}n{-}1,\ldots,{-}1$, $1,\ldots,n$, and the $y$-axis corresponds to the images $w({-}n),w({-}n{-}1),\ldots,w(n)$. As a one-line notation, we will denote $w$ by $[w({-}n),w({-}n{-}1),\ldots,w({-}1)]$ in a nonstandard way, for reasons soon to be clear. A visualization of $w\in B_n$ is shown in Figure~\ref{fig:pointgraph}.
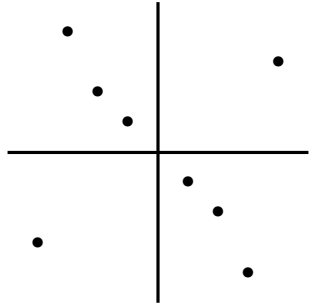
\begin{figure}[h!]
\centering
\begin{tikzpicture}[scale=0.4]
\draw[very thick](0,-5)--(0,5);
\draw[very thick](-5,0)--(5,0);
\node at (-4,-3) {$\bullet$};
\node at (-3,4) {$\bullet$};
\node at (-2,2) {$\bullet$};
\node at (-1,1) {$\bullet$};
\node at (1,-1) {$\bullet$};
\node at (2,-2) {$\bullet$};
\node at (3,-4) {$\bullet$};
\node at (4,3) {$\bullet$};
\end{tikzpicture}
\caption{Visualization for $[-3,4,2,1].$}
\label{fig:pointgraph}
\end{figure}

We first prove some simple lemmas regarding structures of signed permutations avoiding 2143 or 1234.
\begin{lemma}\label{lem:struc}
The following statements are true:
\begin{enumerate}
    \item Suppose $w\in B_n^j(2143)$, where the $j$ positive indices with positive images are $1\leq i_1<i_2<\ldots<i_j\leq n$. Then $w(i_1)<w(i_2)<\ldots <w(i_j)$. 

    \item Suppose $w\in B_n^j(1234)$, where the $j$ positive indices with positive images are $1\leq i_1<i_2<\ldots<i_j\leq n$. Then $w(i_1)>w(i_2)>\ldots>w(i_j)$.
\end{enumerate}
\end{lemma}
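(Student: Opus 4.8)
The plan is to prove both parts by contradiction, exploiting the defining antisymmetry $w(i)=-w(-i)$ of signed permutations to manufacture a forbidden pattern out of a single offending pair of positive-positive entries. The key observation is that any pair of positions $i_p<i_q$ in $\{1,\dots,n\}$ with $w(i_p),w(i_q)>0$ automatically comes with a ``mirrored'' pair of positions $-i_q<-i_p$ carrying the negated values, and the four positions $-i_q<-i_p<i_p<i_q$ are genuinely in this order since $0<i_p<i_q$ (and $0$ is not a position).

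For part (1), I would suppose toward a contradiction that $w(i_p)>w(i_q)$ for some $1\le p<q\le j$. Consider the four positions $-i_q<-i_p<i_p<i_q$, whose images are $w(-i_q)=-w(i_q)$, $w(-i_p)=-w(i_p)$, $w(i_p)$, and $w(i_q)$. Since $w(i_p)>w(i_q)>0$, one reads off the chain $-w(i_p)<-w(i_q)<w(i_q)<w(i_p)$, i.e. taking $(a,b,c,d)=(-i_q,-i_p,i_p,i_q)$ we get $w(b)<w(a)<w(d)<w(c)$, a $2143$ pattern, contradicting $w\in B_n^j(2143)$. Hence $w(i_p)<w(i_q)$ for all $p<q$, which is the claim.

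For part (2), the argument is the mirror image: suppose $w(i_p)<w(i_q)$ for some $p<q$. Using the same four positions $-i_q<-i_p<i_p<i_q$, the images $-w(i_q),-w(i_p),w(i_p),w(i_q)$ now form a strictly increasing sequence (because $0<w(i_p)<w(i_q)$), i.e. a $1234$ pattern, contradicting $w\in B_n^j(1234)$. So $w(i_p)>w(i_q)$ for all $p<q$.

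I do not expect any real obstacle here; the proof is short. The only points requiring care are tracking the position order $-i_q<-i_p<i_p<i_q$ together with the sign flips, and noting that the four chosen positions are legitimate because $0$ is not an index. It is worth isolating this reflection trick, since the interplay between positive entries at positive positions and their negated mirror images will be used repeatedly in what follows.
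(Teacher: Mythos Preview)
Your proof is correct and is essentially identical to the paper's own argument: both pick an offending pair of positive indices, reflect to the four positions $-i_q<-i_p<i_p<i_q$, and read off the forbidden pattern from the antisymmetry $w(-i)=-w(i)$. The only cosmetic difference is that the paper writes out only case (1) and declares (2) analogous, whereas you spell out both.
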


\begin{proof}
The proofs for the two cases are completely analogous so let's consider $w\in B_n^j(2143)$. If (1) were not true, then there exists a pair of positive indices $i<j$ with $1\leq w(j)<w(i)\leq n$. Consider the pattern forming at the indices $-j, -i, i, j$. We have $w(-i)<w(-j)<0<w(j)<w(i)$, or in other words, the pattern is $2143$. This is a contradiction with $w\in B_n^j(2143)$.
\end{proof}

\begin{lemma}\label{lem:ignore}
Let $\pi\in \{2143, 1234\}$. If $w\in B_n$ contains $\pi$, then there exist indices ${-}n\leq i_1<i_2<i_3<i_4\leq n$ forming the pattern $\pi$ so that there exists no $i_k>0$ with $w(i_k)<0$.
\end{lemma}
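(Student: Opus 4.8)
The plan is to exploit the central ($180^\circ$) symmetry of the point graph: since $w(-i)=-w(i)$ for every $i$, rotating the point graph by $180^\circ$ about the origin sends $w$ to itself, and since both $1234$ and $2143$ are invariant under reverse--complement (the combinatorial shadow of this rotation), the rotation sends any occurrence $(a_1,a_2,a_3,a_4)$ of $\pi$ to another occurrence $(-a_4,-a_3,-a_2,-a_1)$ of $\pi$ \emph{in the same} $w$. Call a point $(i,w(i))$ \emph{bad} if $i>0$ and $w(i)<0$; these are exactly the points the lemma forbids, i.e.\ the points in the fourth quadrant. A point is bad if and only if its rotation lies in the second quadrant $\{(i,w(i)): i<0,\ w(i)>0\}$, so the rotation $O'$ of an occurrence $O$ has no bad point if and only if $O$ has no point in the second quadrant.

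The first step is to determine which quadrants the four points of a single occurrence of $\pi$ can occupy. For $\pi=1234$: since an occurrence has increasing indices and increasing images, a short check (which of a second--quadrant point and a fourth--quadrant point must come first, in index and in value) shows that an occurrence cannot contain both a bad point and a second--quadrant point. Hence for $1234$, given any occurrence $O$, either $O$ has no bad point and we are done, or $O$ has no second--quadrant point and then its rotation $O'$ has no bad point and we are done. For $\pi=2143$, whose image ranks read $(2,1,4,3)$, the analogous check shows that the only way an occurrence can meet both the second and the fourth quadrant is for those two points to be the two entries of one of the two descent pairs (the first two or the last two positions); determining the remaining two entries then forces the occurrence to have one of exactly two shapes, which, writing the quadrant of each point in order, are $(\mathrm{II},\mathrm{IV},\mathrm{I},\mathrm{I})$ and $(\mathrm{III},\mathrm{III},\mathrm{II},\mathrm{IV})$.

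It remains to handle these two shapes for $2143$. The key observation is that in each shape one of the two descent pairs already lies entirely in the ``good corner'': it is a descent pair inside quadrant I in the first shape (the pair at positions $3,4$) and inside quadrant III in the second (the pair at positions $1,2$). I would finish with the remark that any descent pair $(i_1,i_2)$ contained in quadrant I (resp.\ quadrant III) can be \emph{doubled}: its rotation $(-i_2,-i_1)$ lies in quadrant III (resp.\ quadrant I), strictly to the lower left (resp.\ upper right) of it, so the four points $(-i_2,-i_1,i_1,i_2)$ (resp.\ $(i_1,i_2,-i_2,-i_1)$) form an occurrence of $2143$ none of whose points is bad. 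This produces the desired occurrence in both shapes and completes the proof.

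The step I expect to be the main obstacle is the classification in the middle paragraph for $2143$: unlike the $1234$ case, an occurrence of $2143$ genuinely can meet both the second and the fourth quadrant, so one must pin down that the only possibilities are the two shapes above and then supply the right replacement (the doubling trick) rather than relying on a single rotation. The $1234$ case and the reflection bookkeeping are routine by comparison.
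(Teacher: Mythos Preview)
Your proposal is correct and follows essentially the same route as the paper. Both arguments use the reverse--complement (rotation) symmetry to reduce to the case where the occurrence meets both quadrants II and IV, observe that this is impossible for $1234$, and for $2143$ pin down the two possible configurations (your shapes $(\mathrm{II},\mathrm{IV},\mathrm{I},\mathrm{I})$ and $(\mathrm{III},\mathrm{III},\mathrm{II},\mathrm{IV})$, the paper's cases $(\ell,k)=(1,2)$ and $(\ell,k)=(3,4)$) and then apply exactly your ``doubling trick'': the paper writes down the same replacement indices $-i_4<-i_3<i_3<i_4$ and $i_1<i_2<-i_2<-i_1$. Your worry about the classification step is unfounded; it is the one-line observation that a quadrant-II position $\ell$ and a quadrant-IV position $k$ must satisfy $\ell<k$ with a larger value at $\ell$, which in $2143$ forces $(\ell,k)\in\{(1,2),(3,4)\}$.
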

\begin{proof}
Let $w\in B_n$ and $\pi\in\{2143,1234\}$. Say $w$ contains $\pi$ at indices $i_1<i_2<i_3<i_4$. If there exists no $i_k>0$ with $w(i_k)<0$, then we are done. And if there exists no $i_k<0$ with $w(i_k)>0$, then we are also done by considering indices $-i_4<-i_3<-i_2<-i_1$ as $\pi$ equals its reverse complement. So we can without loss of generality assume that there exists $i_k>0$ with $w(i_k)<0$ and there exists $i_{\ell}<0$ with $w(i_{\ell})>0$. Here, $\ell<k$.

If $\pi=1234$, this scenario is impossible since $i_{\ell}<0<i_k$ but $w(i_{\ell})>0>w(i_k)$. If $\pi=2143$, then either we have $\ell=1$, $k=2$, in which case $0<i_2<i_3<i_4$, $0<w(i_1)<w(i_4)<w(i_3)$ and the indices $-i_4<-i_3<i_3<i_4$ form 2143 or we have $\ell=3$, $k=4$, in which case $i_1<i_2<i_3<0$, $w(i_2)<w(i_1)<w(i_4)<0$ and the indices $i_1<i_2<-i_2<-i_1$ form 2143 with the desired property.
\end{proof}

Lemma~\ref{lem:struc} is saying that signed permutations that avoid 2143 (or 1234) have increasing (decreasing) sequence in the top right and bottom left quadrant, and moreover, Lemma~\ref{lem:ignore} allows us to ignore the contribution from the bottom right quadrant so that we can focus on the top left quadrant. Figure~\ref{fig:struct} depicts this idea.

\begin{figure}[h!]
\centering
\begin{tikzpicture}[scale=0.3]
\draw[very thick](0,-5)--(0,5);
\draw[very thick](-5,0)--(5,0);
\node at (-4,-4) {$\bullet$};
\node at (-3,-3) {$\bullet$};
\node at (-2,-2) {$\bullet$};
\node at (-1,-1) {$\bullet$};
\node at (1,1) {$\bullet$};
\node at (2,2) {$\bullet$};
\node at (3,3) {$\bullet$};
\node at (4,4) {$\bullet$};
\node at (5.5,2.5) {$j{=}4$};
\draw[thick](1,-1)--(4,-4);
\draw[thick](4,-1)--(1,-4);
\end{tikzpicture}
\hspace{1cm}
\begin{tikzpicture}[scale=0.3]
\draw[very thick](0,-5)--(0,5);
\draw[very thick](-5,0)--(5,0);
\node at (-4,-1) {$\bullet$};
\node at (-3,-2) {$\bullet$};
\node at (-2,-3) {$\bullet$};
\node at (-1,-4) {$\bullet$};
\node at (1,4) {$\bullet$};
\node at (2,3) {$\bullet$};
\node at (3,2) {$\bullet$};
\node at (4,1) {$\bullet$};
\node at (5.5,3) {$j{=}4$};
\draw[thick](1,-1)--(4,-4);
\draw[thick](4,-1)--(1,-4);
\end{tikzpicture}
\caption{Structure of signed permutations avoiding 2143 (left) or 1234 (right)}
\label{fig:struct}
\end{figure}
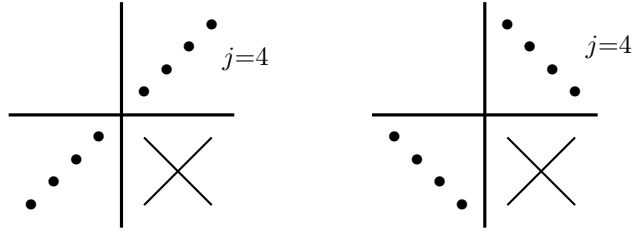

\begin{example}

Figure~\ref{fig:example-perm} shows an explicit signed permutation that avoids $2143$, with the dots in the bottom left and top right quadrant highlighted, and the dots in the bottom right quadrant in gray. Lemma~\ref{lem:struc} tells us that the highlighted dots form an increasing sequence. Lemma~\ref{lem:ignore} tells us that avoidance of $2143$ can be checked without considering the gray dots.

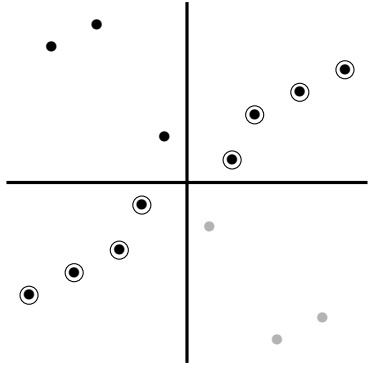
\begin{figure}[h!]
\begin{tikzpicture}[scale=0.3]
\draw[very thick](0,-8)--(0,8);
\draw[very thick](-8,0)--(8,0);
\node at (-7,-5) {$\bullet$};
\draw (-7,-5) circle (0.4);
\node at (-5,-4) {$\bullet$};
\draw (-5,-4) circle (0.4);
\node at (-3,-3) {$\bullet$};
\draw (-3,-3) circle (0.4);
\node at (-2,-1) {$\bullet$};
\draw (-2,-1) circle (0.4);
\node at (-6,6) {$\bullet$};
\node at (-4,7) {$\bullet$};
\node at (-1,2) {$\bullet$};
\node at (7,5) {$\bullet$};
\draw (7,5) circle (0.4);
\node at (5,4) {$\bullet$};
\draw (5,4) circle (0.4);
\node at (3,3) {$\bullet$};
\draw (3,3) circle (0.4);
\node at (2,1) {$\bullet$};
\draw (2,1) circle (0.4);
\node [opacity=.3] at (6,-6) {$\bullet$};
\node [opacity=.3] at (4,-7) {$\bullet$};
\node [opacity=.3] at (1,-2) {$\bullet$};
\end{tikzpicture}
\caption{The $2143$-avoiding signed permutation $[-5,6,-4,7,-3,-1,2]$}
\label{fig:example-perm}
\end{figure}
\end{example}

With Lemma~\ref{lem:struc} and Lemma~\ref{lem:ignore}, we will generate $w\in B_n^j(\pi)$ by inserting elements into the top left quadrant one by one in the increasing order of their images. This idea of generating permutations is developed by West \cite{west1995generating}. For $w\in B_n^j(\pi)$, call the spaces between negative indices \textit{sites}, including the space left of $-n$ and the space right of $-1$ so that there are $n+1$ sites. Similarly, call the vertical spaces \textit{gaps}. To define insertions more formally, we introduce the auxiliary function
\[
\beta_\ell(x)= 
\begin{cases} 
x & |x|<\ell, \\
x-1 & x<-\ell,\\
x+1 & x>\ell,\\
\end{cases}
 \]
which can be thought as the function pushing images to their new locations when the gap between $\ell-1$ and $\ell$ gets a new image.
\begin{definition}\label{def:insert}
For $w\in B_n$, let $w_{\ell}^{-i}\in B_{n+1}$ be the signed permutation obtained by \textit{inserting a new element to site} $-i$ \textit{and gap} $\ell$, defined by
\[
w_{\ell}^{-i}(-k)=
\begin{cases}
\beta_{\ell}\big(w(-k-1)\big) & i+1\leq k\leq n+1,\\
\ell & k=i,\\
\beta_{\ell}\big(w(-k)\big) & 1\leq k\leq i-1.
\end{cases}
\]
\end{definition}

Definition~\ref{def:insert} is just a formal way to express inserting an element to a specific position (and its antipode) in a signed permutation. Now we are ready to introduce the main object of interest in this section.
\begin{definition}
For $\pi\in\{2143,1234\}$ and $j\geq0$, let $BT^j(\pi)$, the \textit{signed permutation pattern avoidance tree}, to be a rooted tree labeled by signed permutations, defined as follows:
\begin{itemize}
    \item its root is $[-j,\ldots,-1]$ for $\pi=2143$ and $[-1,\ldots,-j]$ for $\pi=1234$,
    \item the successors of $w\in B_n^j(\pi)$ are all $w_{\ell}^{-i}$'s with $1\leq i\leq n+1$ and $m<\ell\leq n+1$ that still avoid $\pi$, where $m=\max_{1\leq k\leq n}\{w(-k)\}\cup\{0\}$.
\end{itemize}
\end{definition}

Let us briefly discuss some essential properties of this tree. Note that any permutation avoiding $\pi$ can be constructed by starting with the signed permutation in its top right and bottom left quadrants (which has the form of the corresponding root by Lemma \ref{lem:struc}) and inserting its other elements in increasing order to the top left quadrant. Hence, any permutation avoiding $\pi$ appears in $BT^j(\pi)$. Moreover, any permutation avoiding $\pi$ appears in $BT^j(\pi)$ exactly once, since there is only one way to insert its elements in increasing order. Hence, the vertices of $BT^j(\pi)$ which are $n-j$ steps away from the root correspond precisely to the signed permutations of length $n$ avoiding $\pi$ with exactly $j$ positive indices with positive images. In other words, the vertices $n-j$ steps away from the root correspond to $B^j_n(\pi)$.

The main results of this section are the following.
\begin{proposition}\label{prop:suc2143}
The generating tree given by the following:
\begin{itemize}
    \item the label of the root is $(j{+}1, j{+}1, j{+}1)$;
    \item the succession function $suc$ that takes a label as its input and outputs the set of successors is defined recursively as follows:
    $$suc(x,y,z)=
    \begin{cases}
    \emptyset &z=0,\\
    \{(2,y{+}1,z),(3,y{+}1,z),\ldots,(x{+}1,y{+}1,z)\\
    (x,x{+}1,z),(x,x{+}2,z),\ldots,(x,y,z)\}\bigcup suc(x,x,z{-}1) &z\geq1.
    \end{cases}$$
\end{itemize}

is isomorphic as a rooted tree to $BT^j(2143)$.
\end{proposition}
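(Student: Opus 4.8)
The strategy is to define a labeling of the vertices of $BT^j(2143)$ by triples of positive integers, to check that the root receives the label $(j{+}1,j{+}1,j{+}1)$, and to check that a vertex labeled $(x,y,z)$ has, counted with multiplicity, exactly the children labels in $suc(x,y,z)$, via an explicit bijection between its children and the elements of $suc(x,y,z)$. Since a generating tree is determined up to isomorphism by its root label together with its succession rule, this proves the proposition.

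For $w\in B_n^j(2143)$ put $m=\max\bigl(\{w(-k):1\le k\le n\}\cup\{0\}\bigr)$, the largest image at a negative index, so that the admissible gaps for insertion are $m{+}1,\dots,n{+}1$; set $z(w)=n+1-m$. A direct computation with $\beta_\ell$ shows that the largest negative-index image of $w_\ell^{-i}$ is $\ell$, hence $z(w_\ell^{-i})=n-\ell+2$; so as $\ell$ ranges over the admissible gaps the third coordinate of the children runs over $z(w),z(w){-}1,\dots,1$ once each, matching the third coordinates appearing in $suc$. To analyze a fixed gap $\ell$, we use Lemma~\ref{lem:ignore} to test $2143$-avoidance of $w_\ell^{-i}$ while ignoring its bottom-right quadrant; since $w$ itself avoids $2143$ and $\beta_\ell$ is order-preserving, any $2143$ in $w_\ell^{-i}$ must use the newly inserted point, and because that point lies strictly above every top-left and bottom-left point of $w$, a short argument shows it can only play the role of the ``$4$''. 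Thus site $-i$ is \emph{bad} for gap $\ell$ precisely when there are points $p_1,p_2$ to the left of site $-i$ with $p_1$ in the top-left quadrant, $p_2$ in the top-left or bottom-left quadrant, $w(p_1)>w(p_2)$, together with a point $p_4$ to the right of site $-i$ in the top-left or top-right quadrant with $w(p_1)<w(p_4)<\ell$.

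The existence of such a ``bad pair'' $(p_1,p_2)$ is independent of $\ell$, and the sites with no bad pair to their left form an initial segment of the list of sites; let $x(w)$ be its size. When $\ell\ge m{+}2$, the value $m{+}1$ is a top-right value of $w$ sitting strictly below the new point and to its right, so every bad pair is completed to a forbidden pattern and the valid sites for gap $\ell$ are exactly the $x(w)$ sites of this initial segment. When $\ell=m{+}1$, the witness $p_4$ must instead have $w(p_4)\le m$, and the valid sites are the initial segment together with a final segment of the site list; let $y(w)$ be the total number of valid sites for gap $\ell=m{+}1$. The root now checks out immediately: $[-j,\dots,-1]$ has empty top-left quadrant, so $m=0$, $z=j{+}1$, there are no bad pairs at all, every one of the $j{+}1$ sites and every gap is valid, and $x=y=j{+}1$.

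The heart of the proof is to show that the first two coordinates of the children match: inserting into the initial segment at gap $m{+}1$ should give new first coordinates $2,\dots,x{+}1$ and new second coordinate $y{+}1$ (the set $\{(2,y{+}1,z),\dots,(x{+}1,y{+}1,z)\}$ appearing in $suc(x,y,z)$); inserting into the final segment at gap $m{+}1$ should keep the first coordinate equal to $x$ and give new second coordinate running over $x{+}1,\dots,y$ (the set $\{(x,x{+}1,z),\dots,(x,y,z)\}$); and inserting into the initial segment at gap $\ell\ge m{+}2$ should give new first coordinates $2,\dots,x{+}1$ and new second coordinate $x{+}1$ (accounting for the recursively defined part $suc(x,x,z{-}1)$, with the third coordinate at the value $n-\ell+2$ computed above). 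Each of these is proved by a finite but delicate case analysis tracking how the inserted point, and the index-shifts it induces, create or destroy bad pairs and their $p_4$-witnesses; Lemma~\ref{lem:struc} (the top-right and bottom-left quadrants are increasing) and the reverse-complement symmetry of $2143$ are the main tools for keeping the casework bounded. I expect the subtlest point to be the bookkeeping for the new second coordinate — in particular, showing that an insertion into the initial segment enlarges, by exactly one, the number of valid sites at the next-higher gap, which is what forces the new second coordinate to be $y{+}1$ or $x{+}1$ rather than a more complicated function of the chosen site.
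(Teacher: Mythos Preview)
Your overall strategy --- labeling each $w\in B_n^j(2143)$ by $(x,y,z)$ and verifying the succession rule --- is exactly the paper's approach, and your three statistics coincide with the paper's: your $z=n+1-m$ equals the layer number, and since the bottom-left quadrant is increasing by Lemma~\ref{lem:struc}, your ``no bad pair with $p_1$ in the top-left'' condition is equivalent to ``before the first descent''.

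However, your structural claim that the valid sites for gap $\ell=m{+}1$ form an initial segment together with a \emph{final} segment is false. Take $w=[-6,4,-3,5,2,1]\in B_6^2(2143)$ (the paper's running example, Figure~\ref{fig:2143}): here $m=5$, and the active sites for $\ell=6$ are the first, second, third, fifth, and sixth of the seven sites --- but \emph{not} the seventh (rightmost) one, since inserting $6$ just right of the value $1$ creates the pattern $2,1,6,3$ using the top-right value $3$ as the ``$3$''. So the active sites after the initial block need not be a suffix. The correct mechanism, which the paper establishes via Lemma~\ref{lem:activesites}, is that when you insert at an active site after the first descent, the new active sites are exactly the $x$ sites before the (unchanged) first descent together with the previously active sites to the right of the insertion; this is what forces the new second coordinate to run over $x{+}1,\dots,y$ as the insertion sweeps across the after-descent active sites. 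Your sketch defers precisely this argument to a ``finite but delicate case analysis'', and the incorrect final-segment picture would send that analysis in the wrong direction.
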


\begin{proposition}\label{prop:suc1234}
The generating tree given by the following:
\begin{itemize}
    \item the label of the root is $(j{+}1, j{+}1, j{+}1)$;
    \item the succession function $suc$ that takes a label as its input and outputs the set of successors is defined recursively as follows:
    $$suc(x,y,z)=
    \begin{cases}
    \{(2,y{+}1,z),(3,y{+}1,z),\ldots,(x{+}1,y{+}1,z)\\
    (x,x{+}1,z),(x,x{+}2,z),\ldots,(x,y,z)\} &z=1,\\
    \{(2,y{+}1,z),(3,y{+}1,z),\ldots,(x{+}1,y{+}1,z)\}\\
    \bigcup suc(x,y,z{-}1) &z\geq2.
    \end{cases}$$
\end{itemize}
is isomorphic as a rooted tree to $BT^j(1234)$.
\end{proposition}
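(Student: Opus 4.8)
The plan is to decorate each vertex $w\in B^j_n(1234)$ of $BT^j(1234)$ with a triple of integer statistics $(x(w),y(w),z(w))$ and then to verify two things: that the root $[-1,\dots,-j]\in B^j_j(1234)$ gets the label $(j{+}1,j{+}1,j{+}1)$, and that whenever $w$ has label $(x,y,z)$ the children of $w$ in $BT^j(1234)$ receive exactly the labels in $suc(x,y,z)$, each with the prescribed multiplicity and with distinct children getting distinct labels. Since a generating tree is determined up to rooted-tree isomorphism by its root label and its succession rule, these two verifications prove the proposition; the analogous argument (with the decreasing run in the top-left quadrant replaced by an increasing one, and with the $z=0$ base case) would give Proposition~\ref{prop:suc2143}.

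First I would fix the combinatorial picture. By Lemmas~\ref{lem:struc} and \ref{lem:ignore} it suffices to keep track of the top-left quadrant of $w$ together with the shape of its top-right quadrant, which is a strictly decreasing run of $j$ dots; a successor of $w$ is obtained by inserting one new dot into the top-left quadrant, higher than all dots already there, at one of the $n{+}1$ sites and one of the admissible gaps. The statistic $z(w)$ I would take to be one plus the number of top-right dots lying strictly above every top-left dot, so $z=j{+}1$ when the top-left quadrant is empty and $z$ can only weakly decrease along an edge since each new dot is higher than the previous ones. The statistics $x(w)\le y(w)$ I would define to record the structure of the ``active'' sites — a site being active at a given height if inserting a new dot there at that height keeps $1234$-avoidance — with $x$ counting, informally, the sites active at the lowest admissible gap and $y$ the sites active at all, their precise definitions pinned down by matching against the succession rule. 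That the root has label $(j{+}1,j{+}1,j{+}1)$ is then immediate: with an empty top-left quadrant no single insertion can create a $1234$, so all $j{+}1$ sites are active at every height and all $j{+}1$ top-right dots lie above the (empty) top-left quadrant.

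The heart of the proof is the succession rule. The first structural input should be that, for each fixed site, the set of admissible gaps is an up-set among the ``bands'' into which the free top-right dots cut the vertical axis — i.e.\ inserting a new dot higher never hurts $1234$-avoidance — so each site has a single cutoff band. Stratifying the children of $w$ by the band of the new dot then produces the recursion $suc(x,y,z)=\{(2,y{+}1,z),\dots,(x{+}1,y{+}1,z)\}\cup suc(x,y,z{-}1)$: an insertion into the $t$-th band from the bottom yields a child with third coordinate $t$, the bottommost band $t=1$ being the base case. Within a band one tracks how the active-site set changes: as the insertion site varies over the band the number of active sites should range over exactly $\{2,\dots,x{+}1\}$ — the key point being that a new dot can revive at most one previously inactive site — and $y$ should rise to $y{+}1$ because no site is killed, which accounts for the first family of labels. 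In the base case $z=1$ there are in addition the ``marginal'' sites (active, but only at the top band); an insertion at such a site is forced to the top band, keeps the first coordinate at $x$, and makes a controlled number of higher sites inactive, producing precisely the tail $\{(x,x{+}1,1),\dots,(x,y,1)\}$. A final counting check confirms that all of $suc(x,y,z)$ is attained and that children map bijectively to labels.

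The main obstacle is the case analysis behind these claims. Because a $1234$ involving the new dot can use at most one top-right dot and at most one bottom-left dot (both quadrants being monotone by Lemma~\ref{lem:struc}), one must separately treat the new dot playing the role of the ``$3$'' (completed above by a top-right dot) or of the ``$4$'' (completed below purely by top-left dots, or together with a bottom-left dot), and in each case carefully track the relabelings $\beta_\ell$ of Definition~\ref{def:insert}. Finding statistics for which this bookkeeping closes up exactly into $suc$, and proving the up-set property and the active-site count, is where essentially all the work lies; deducing the rooted-tree isomorphism from the matched root label and succession rule is then formal.
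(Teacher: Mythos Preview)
Your overall strategy matches the paper's: label each $w\in BT^j(1234)$ by a triple $(x,y,z)$ and verify the succession rule. But you stop short of the step that actually makes the argument work, namely committing to explicit definitions of $x$ and $y$; saying they are ``pinned down by matching against the succession rule'' is circular, and your informal guesses (``$x$ counts sites active at the lowest admissible gap, $y$ the sites active at all'') are not the right ones.

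In the paper, $x(w)$ is the number of sites before the first \emph{ascent} among $w(-n),\dots,w(-1)$, and $y(w)$ is the number of active sites in the \emph{top} layer only. The crucial structural fact --- which replaces your general ``up-set/cutoff'' picture with something much sharper --- is that whenever $z>1$, \emph{every} site is active in the top layer (because a $123$ in the top-left quadrant together with the maximal top-right element would already be a $1234$), so $y=n+1$ in that case. Conversely, in any layer $z'$ with $z\ge z'>1$, the active sites are \emph{exactly} the $x$ sites before the first ascent: inserting after the first ascent would create a $12$ with the ascent, a $3$ with the inserted element, and a $4$ with the top-right element just above layer $z'$. So there are no individual ``cutoff bands'' to track; every site is either before-the-first-ascent (active in all admissible layers) or else active only possibly in the top layer.

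With these definitions the succession rule falls out in a short case analysis: inserting into any layer $z'>1$ forces a site before the first ascent, the new first ascent sits just after the insertion (giving new $x\in\{2,\dots,x+1\}$), and the new $z'>1$ forces new $y=(n+1)+1=y+1$; inserting into the top layer before the first ascent behaves the same way, while inserting into the top layer at one of the $y-x$ active sites after the first ascent leaves $x$ unchanged and kills precisely the active sites to its right, producing the tail $(x,x{+}1,1),\dots,(x,y,1)$. Your sketch anticipates the shape of this argument, but without the concrete definitions of $x$ and $y$ and the ``all top-layer sites are active when $z>1$'' observation, the bookkeeping you describe would not close up.
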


Before proving Proposition~\ref{prop:suc2143} and Proposition~\ref{prop:suc1234}, we first discuss some important statistics on the signed permutations of interest, that are related to the variables $x,y,z$ in the above propositions. Examples will come shortly.

For $w\in B^j_n(\pi)$, a \textit{site before the first ascent (descent)} is defined to be a site such that elements to the left of this site are decreasing (increasing). In particular, if $w(-n)>\cdots>w(-1)$ (or increasing), then there are $n+1$ sites before the first ascent (descent). The number of sites before the first ascent (descent) is usually denoted via the variable $x$.

For $w\in B^j_n(\pi)$, an \textit{active site} with respect to a fixed gap $\ell$, is a site such that inserting into this site and gap $\ell$ results in a signed permutation that avoids $\pi$. The number of active sites is usually denoted $y$. We will make further specifications for $\pi=2143$ and $\pi=1234$.

For $w\in B_n^j(\pi)$, there are $j$ positive indices $i$ with positive images. In other words, there are $j$ elements in the top right quadrant, and they divide the top left quadrant into horizontal ``layers". Formally, the \textit{layer number} is $1+\#\{i>0:w(i)>\max_{k<0}w(k)\}$, describing the current layer that we are inserting elements into. The layer number is denoted $z$, and it ranges from $j+1$ to 1.

Recall that we are constructing signed permutations in $BT^j(\pi)$ by inserting elements into the top left quadrant in increasing order of the images. Therefore, we will be saying inserting into some layer instead of inserting into some gap. The following Lemma~\ref{lem:activesites} is useful and can be observed directly so we omit the proof.

\begin{lemma}\label{lem:activesites}
Let $w\in B_n^j(\pi)$ and fix a layer $z$ that we are inserting elements into. If we insert the new maximal image $\ell$ in the left quadrants to some active site $-i$, then the new active sites of $w^{-i}_{\ell}$ are a subset of the old ones: here we think of the site where we inserted $\ell$ to have split into two (so the number of active sites may potentially increase by at most 1). Furthermore, if a previously active site becomes inactive, then inserting $\ell+1$ there would create a pattern $\pi$ involving both $\ell$ and $\ell+1$.
\end{lemma}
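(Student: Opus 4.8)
The plan is to deduce both parts of the lemma from a single monotonicity principle: \emph{a site that is inactive before an insertion stays inactive afterwards}. Fix the layer $z$, let $w\in B^j_n(\pi)$, let $-i$ be an active site, and set $w':=w^{-i}_\ell$, where $\ell$ is the inserted value, the new maximal image in the left quadrants. Let $A$ be the set of active sites of $w$ for inserting $\ell$ into layer $z$, and $A'$ the set of active sites of $w'$ for inserting $\ell+1$ into layer $z$; note $w$ avoids $\pi$, and so does $w'$ because $-i\in A$. Each site of $w'$ is either one of the two sites into which $-i$ has split, or the natural image of some site $s\neq -i$ of $w$; we impose nothing on the split sites, which accounts for the possible increase by one. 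It therefore suffices to show, for a site $s\neq -i$ of $w$: (a) if $s\notin A$ then $s\notin A'$; and (b) if $s\in A$ but $s\notin A'$, then inserting $\ell+1$ at $s$ into $w'$ creates a pattern $\pi$ that uses the point $(-i,\ell)$. Part (a) yields $A'\subseteq(A\setminus\{-i\})\cup\{\text{the two split sites}\}$, which is the first assertion of the lemma, together with the bound $|A'|\le|A|+1$; part (b) is the second assertion, since $(-i,\ell)$ has image $\ell$ while the point just inserted has image $\ell+1$.

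Both parts follow from a transfer of patterns. Let $P$ be the point inserted at site $s$ with value $\ell$ when we test whether $s\in A$, let $P'$ be the point inserted at site $s$ with value $\ell+1$ when we test whether $s\in A'$, and set $Q:=(-i,\ell)$, a point of $w'$. Because $\ell$ (respectively $\ell+1$) is the maximal left-quadrant image after insertion, every point of $w$ in the left quadrants, and the point $Q$, lies below $P$ (respectively below $P'$); and because $\ell$ and $\ell+1$ both lie in layer $z$, every top-right point lies either above both or below both. Hence $P$ and $P'$ occupy the same position relative to every point of $w$: vertically by the previous sentence, and horizontally because both sit at site $s$ and inserting $Q$ at $-i\neq s$ does not change horizontal order. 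Since relabellings preserve the relative order among retained points, a $\pi$-occurrence formed by $P$ together with three points of $w$ is order-isomorphic to the configuration formed by $P'$ together with those same three points, and conversely. Now, by Lemma~\ref{lem:ignore} and because $w$ and $w'$ avoid $\pi$, any $\pi$-occurrence created by inserting $P$ (respectively $P'$) may be taken to avoid the bottom-right quadrant, and it must then consist of $P$ (respectively $P'$) together with three points of $w$ (respectively three points each of which is a point of $w$ or the point $Q$). If $s\notin A$, the three points accompanying $P$ lie in $w$ and transfer to a $\pi$-occurrence witnessing $s\notin A'$, which proves (a). If $s\in A$ but $s\notin A'$, then the three points accompanying $P'$ cannot all lie in $w$, since transferring them back would contradict $s\in A$; hence one of them is $Q$, which proves (b).

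The main obstacle is entirely bookkeeping: one must check that the index- and image-relabellings of Definition~\ref{def:insert} and $\beta_\ell$ send quadrants to quadrants, preserve the relative order of retained points, and leave every point of $w$ in the same position relative to site $s$ after $Q$ is inserted at $-i$; and one must confirm that ``inserting $\ell+1$ into layer $z$'' is indeed the correct description of the next insertion into that layer. All of this is elementary and does not distinguish $\pi=2143$ from $\pi=1234$, since the pattern enters only through Lemma~\ref{lem:ignore}; this is why the lemma can be observed directly, the substance being the monotonicity of inactivity and the identification of $Q$ as the obstruction whenever a site newly becomes inactive.
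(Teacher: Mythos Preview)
Your argument is correct. The paper itself omits the proof entirely, remarking only that the lemma ``can be observed directly,'' so there is no approach to compare against; your write-up supplies exactly the routine verification the authors elide. The key point you isolate---that the test insertions $P$ (value $\ell$ into $w$) and $P'$ (value $\ell+1$ into $w'$) sit in the same relative position to every point of $w$ because both are maximal in the left quadrants and both lie in the same layer $z$---is precisely the content of ``observed directly,'' and your invocation of Lemma~\ref{lem:ignore} to push any witnessing $\pi$-occurrence off the bottom-right quadrant (so that its non-inserted points genuinely come from $w$, or from $w$ together with $Q$) is the right way to make the transfer rigorous.
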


Now we are ready to separate the cases $\pi=2143$ and $\pi=1234$.

First consider $\pi=2143$. Keep track of the following statistics on $w\in B_n^j(2143)$:
\begin{itemize}
    \item $x$: the number of sites before the first descent,
    \item $y$: the number of active sites in the current layer $z$,
    \item $z$: the layer number, which is the lowest layer to which the maximal image of the negative indices can be inserted.
\end{itemize}
Figure~\ref{fig:2143} shows the statistics $x=3$, $y=5$ and $z=2$ for $w=[-6,4,-3,5,2,1]$ where we see that the layer numbers are decreasing from bottom to top and the active sites in each layer are labeled by $\times$.
\begin{figure}[h!]
\centering
\begin{tikzpicture}[scale=0.6]
\draw[very thick](0,-7)--(0,8);
\draw[very thick](-8,0)--(8,0);
\node at (-1,1) {$\bullet$};
\node at (-2,2) {$\bullet$};
\node at (-5,4) {$\bullet$};
\node at (-3,5) {$\bullet$};
\node at (-4,-3) {$\bullet$};
\node at (-6,-6) {$\bullet$};
\node at (4,3) {$\bullet$};
\node at (6,6) {$\bullet$};
\draw(-7,6)--(6,6);
\draw(-7,3)--(4,3);
\draw[dashed](-6,-7)--(-6,7);
\draw[dashed](-5,-7)--(-5,7);
\draw[dashed](-4,-7)--(-4,7);
\draw[dashed](-3,-7)--(-3,7);
\draw[dashed](-2,-7)--(-2,7);
\draw[dashed](-1,-7)--(-1,7);
\node at (-6.5,5.5) {$\times$};
\node at (-5.5,5.5) {$\times$};
\node at (-4.5,5.5) {$\times$};
\node at (-2.5,5.5) {$\times$};
\node at (-1.5,5.5) {$\times$};
\node at (-6.5,6.5) {$\times$};
\node at (-5.5,6.5) {$\times$};
\node at (-4.5,6.5) {$\times$};
\node at (-8.2,4.5) {$z=$};
\node (z3) at (-7.5,1.5) {3};
\node (z2) at (-7.5,4.5) {2};
\node (z1) at (-7.5,6.5) {1};
\draw[->] (z3)--(z2);
\draw[->] (z2)--(z1);
\node at (1,5.5) {$y=5$};
\node at (5,4.5) {$j=2$};
\draw [decorate,decoration={brace,amplitude=5pt}]
(-7,7) -- (-4,7) node [black,midway,yshift=10pt] {$x=3$};
\end{tikzpicture}
\caption{The statistics $x=3$, $y=5$, $z=2$ for $w=[-6,4,-3,5,2,1]\in B_6^2(2143).$}
\label{fig:2143}
\end{figure}
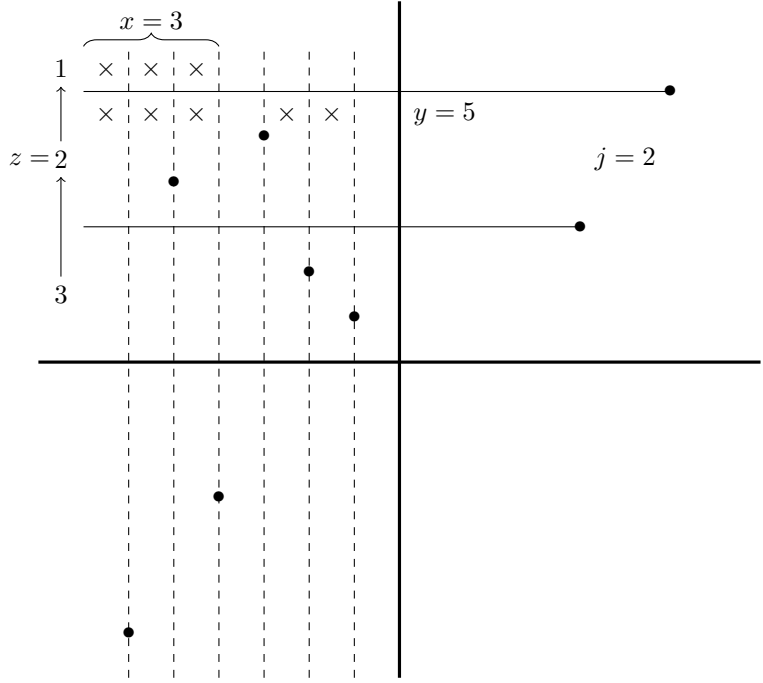

\begin{proof}[Proof of Proposition~\ref{prop:suc2143}]
It suffices to show that if the statistics of $w$ are $x,y,z$, then the multiset of statistics of the successors of $w$ in $BT^j(2143)$ is precisely what is given in the proposition.

As we are inserting new images in increasing order, there are no successors with layer number $z'>z$. Let us first determine the number of active sites $y_{z'}$ in each layer numbered $z'\leq z$. We know that the number of active sites in the layer $z$ is $y$. Let us show that the number of active sites in all layers $z'<z$ is $y_{z'}=x$. To see this, note that all sites before the first descent are active, as inserting there, the inserted element would have to be a $2$ or a $1$ in a $2143$, but this is impossible as all larger elements are in the top right quadrant, hence form an increasing sequence (so there is no $43$). Additionally, note that inserting to a site after the first descent would form a $2143$ involving the first descent as $21$, the inserted element as $4$, and the element immediately below it in the top right quadrant as $3$ (this exists since $z'<z$, so the layer was previously empty). Therefore, the active sites for layers $z'<z$ are precisely the $x$ sites before the first descent.

Let us now determine the successors with layer number $z'$, given that the number of active sites in the layer $z'$ is initially $y_{z'}$. It is again easy to see that all sites before the first descent are active, and it follows from Lemma \ref{lem:activesites} that all active sites remain active after such an insertion. The new first descent appears immediately after the insertion. Hence, the successors from this case are $(2,y_{z'}+1, z'), (3, y_{z'}+1, z'), \ldots,  (x+1,y_{z'}+1, z')$.

Let us now consider the case of inserting to an active site after the first descent. In that case, the insertion leaves the position of the first descent unchanged. As for the active sites, all sites before the first descent are still active, all sites to the right of the first descent but to the left of the insertion are inactive (since inserting there would create an obvious $2143$), and all previously active sites to the right of the insertion remain active, since by Lemma \ref{lem:activesites}, if we also inserted to one such site and created a $2143$, it would need to involve both of the last two insertions, and these could only be $2$ and $3$, but no $4$ can be found between them. Hence, the successors from this case are $(x,x+1,z'), (x, x+2, z'), \ldots, (x, y_{z'}, z')$.

We have now explicitly found the set of successors, since we have found $y_{z'}$ for each $z'\leq z$ and given the successors with each layer number $z'\leq z$ in terms of $y_{z'}$. This multiset of successors is what is given in the proposition.
\end{proof}

Next consider $\pi=1234$. Keep track of the following statistics on $w\in B_n^j(2134)$:
\begin{itemize}
    \item $x$: the number of sites before the first ascent,
    \item $y$: the number of active sites in the top layer,
    \item $z$: the layer number.
\end{itemize}
Figure~\ref{fig:1234} shows the statistics $x=3$, $y=7$, $z=3$ for $w=[2,-3,4,-5,1,-6]$.

\begin{figure}[h!]
\centering
\begin{tikzpicture}[scale=0.6]
\node at (-2,1) {$\bullet$};
\node at (-6,2) {$\bullet$};
\node at (-4,4) {$\bullet$};
\node at (-1,-6) {$\bullet$};
\node at (-3,-5) {$\bullet$};
\node at (-5,-3) {$\bullet$};
\node at (1,6) {$\bullet$};
\node at (3,5) {$\bullet$};
\node at (5,3) {$\bullet$};
\draw[very thick](0,-7)--(0,8);
\draw[very thick](-8,0)--(8,0);
\draw(-7,6)--(1,6);
\draw(-7,5)--(3,5);
\draw(-7,3)--(5,3);
\draw[dashed](-6,-5)--(-6,7);
\draw[dashed](-5,-5)--(-5,7);
\draw[dashed](-4,-5)--(-4,7);
\draw[dashed](-3,-7)--(-3,7);
\draw[dashed](-2,-7)--(-2,7);
\draw[dashed](-1,-7)--(-1,7);
\node at (-6.5,4.5) {$\times$};
\node at (-5.5,4.5) {$\times$};
\node at (-4.5,4.5) {$\times$};
\node at (-6.5,5.5) {$\times$};
\node at (-5.5,5.5) {$\times$};
\node at (-4.5,5.5) {$\times$};
\node at (-6.5,6.5) {$\times$};
\node at (-5.5,6.5) {$\times$};
\node at (-4.5,6.5) {$\times$};
\node at (-3.5,6.5) {$\times$};
\node at (-2.5,6.5) {$\times$};
\node at (-1.5,6.5) {$\times$};
\node at (-0.5,6.5) {$\times$};
\node at (5,5) {$j=3$};
\node at (-3.5,7.5) {$y=7$};
\draw [decorate,decoration={brace,amplitude=5pt}]
(-4,-5.5) -- (-7,-5.5) node [black,midway,yshift=-10pt] {$x=3$};
\node at (-8.2,4) {$z=$};
\node (z3) at (-7.5,4) {3};
\node (z2) at (-7.5,5.5) {2};
\node (z1) at (-7.5,6.5) {1};
\node (z4) at (-7.5,1.5) {4};
\draw[->] (z4)--(z3);
\draw[->] (z3)--(z2);
\draw[->] (z2)--(z1);
\end{tikzpicture}
\caption{The statistics $x=3$, $y=7$, $z=3$ for $w=[2,-3,4,-5,1,-6]\in B_6^3(1234).$}
\label{fig:1234}
\end{figure}
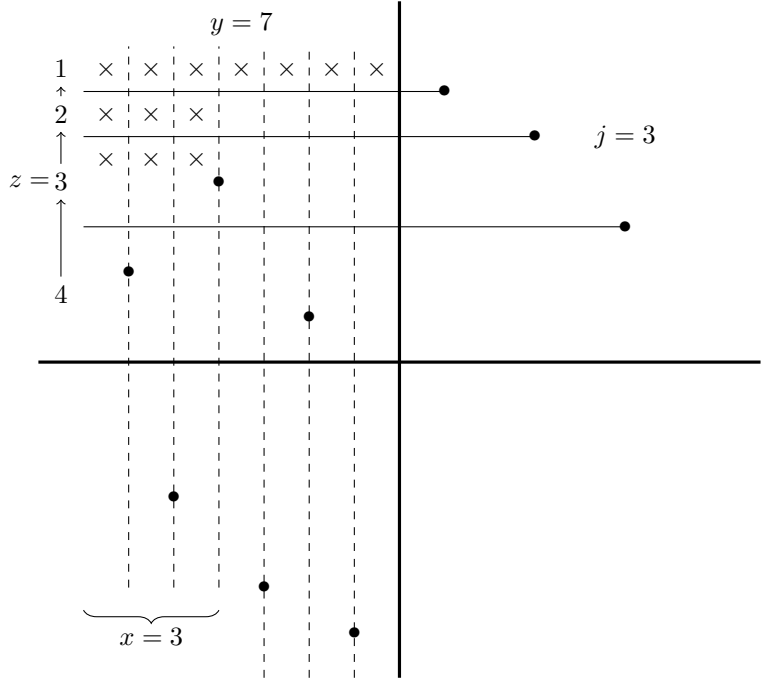

\begin{proof}[Proof of Proposition~\ref{prop:suc1234}]

Again, it suffices to show that if the statistics of $w$ are $x,y,z$, then the multiset of statistics of the successors of $w$ in $BT^j(1234)$ is precisely what is given in the proposition.

As before, there are no successors with layer number $z'>z$. The active sites for layers numbered $z'$ such that $z\geq z'>1$ are precisely the sites before the first ascent, as all sites before the first ascent are active (since the inserted element would have to be a $1$ or $2$ in a $1234$, but then there is no $34$ since all elements larger than the inserted element are in the top right quadrant and form a decreasing sequence), and all sites after the first ascent are inactive because of forming a $1234$ with the maximal element of the top right quadrant serving as a $4$. Also note that for $z>1$, all sites in the top layer are active, as if some site was inactive, there would be a $123$ in the top left quadrant, but this would also mean there exists a $1234$ with the maximal element in the top right quadrant as a $4$, which is impossible. This means that the successors with $z'$ such that $z\geq z'>1$ are precisely $(2,y+1, z'), (3,y+1, z'), \ldots, (x+1, y+1, z')$.

As for successors with $z'=1$, in the top layer all sites before the first ascent are active, and inserting to such a site leaves all active sites active (the proof is analogous to what was seen for Proposition \ref{prop:suc2143}). This case gives the successors $(2, y+1,  1), (3,y+1,1), \ldots, (x+1, y+1, 1)$. If we instead insert to an active site after the first ascent, then the new active sites are precisely the ones to the left of the insertion. The proof is again analogous to what was seen for Proposition \ref{prop:suc2143}. This final case gives the successors $(x, x+1, z), (x, x+2, z), \ldots, (x,y,z)$.

Putting everything together, this multiset of successors is what is given in the proposition.

\end{proof}
 \section{Finishing the proof}\label{sec:proof}
Proposition~\ref{prop:suc2143} and Proposition~\ref{prop:suc1234} allow us to translate the questions of enumerating $B_n^j(2143)$ and $B_n^j(1234)$ to questions of enumerating lattice paths in the integer lattice $\Z^3$ with specified rules. Respectively, let $\P^{2143}$ be the set of all lattice paths specified by the succession rule in Proposition~\ref{prop:suc2143} and let $\P^{1234}$ be the set of all lattice paths specified by the succession rule in Proposition~\ref{prop:suc1234}. We allow arbitrary starting point $(x,y,z)$ for those paths with $2\leq x\leq y$ and $1\leq z$ besides those that start at $(j+1,j+1,j+1)$. We view such a lattice path as a sequence of points connected by edges.

For a path $P\in\P^{2143}$ and an edge $e$ of $P$ that goes from $(x_1,y_1,z_1)$ to $(x_2,y_2,z_2)$, we say that $e$ is \textit{recorded}
\begin{itemize}
    \item if $z_1=z_2$ and $y_2=y_1+1$;
    \item if $z_1>z_2$ (and $y_2=x_1+1$).
\end{itemize}
Notice that if $z_1>z_2$, then we are forced to use the succession rule of $(x_1,x_1,z_2)$ to go to $(x_2,y_2,z_2)$ and thus $y_2=x_1+1$. Analogously, for $P\in\P^{1234}$ and an edge $e$ of $P$ that goes from $(x_1,y_1,z_1)$ to $(x_2,y_2,z_2)$, we say that $e$ is \textit{recorded} if $y_2=y_1+1$. In particular, if $z_2\geq2$, the edge is always recorded.
We see from the succession rule in Section~\ref{sec:suc} that if an edge is not recorded, then the $x$-coordinates are the same for the two points connected by the edge.
\begin{definition}
For a path $P\in\P^{\pi}$, $\pi\in\{1234,2143\}$, define its \textit{signature} $\sig(P)$ to be the tuple consists of the $x$-coordinate of the starting point, appended with the $x$-coordinates of the ending points of recorded edges in order.
\end{definition}
\begin{example}
Consider the following paths $P\in\P^{2143}$ and $P'\in\P^{1234}$ which are
\begin{align*}
    P=&(4,4,3)\rightarrow(3,5,3)\dashrightarrow(3,5,3)\rightarrow(4,4,2)\rightarrow(2,5,2)\dashrightarrow(2,4,2)\\
    &\dashrightarrow(2,4,2)\rightarrow(2,2,1)\rightarrow(2,3,1)\dashrightarrow(2,3,1)\\
    P'=&(4,4,3)\rightarrow(3,5,3)\rightarrow(4,6,3)\rightarrow(2,7,2)\rightarrow(2,8,1)\dashrightarrow(2,7,1)\\
    &\dashrightarrow(2,7,1)\dashrightarrow(2,5,1)\dashrightarrow(2,4,1)\dashrightarrow(2,4,1)\rightarrow(2,5,1)\dashrightarrow(2,4,1)
\end{align*}
where the recorded edges are written as solid arrows and the edges not recorded are written as dashed arrows. Both paths have signature $(4,3,4,2,2,2)$.
\end{example}

The main goal of this section is to show that for a fixed starting point $v$, a fixed signature $\gamma$ and $n\geq1$, the number of paths in $\P^{\pi}$ that start with $v$, have signature $\gamma$ and have length $n$ is the same for $\pi\in\{1234,2143\}$. To do this, let us define the corresponding generating functions. 
For $\pi\in\{1234,2143\}$, $k\geq0$, $q\geq1$, $\gamma=(\gamma_1,\ldots,\gamma_m)\in\Z^m$, define $\P^{\pi}_{k,q,\gamma}$ to be the set of paths in $\P^{\pi}$ that start at $(\gamma_1,\gamma_1+k,q)$ and have signature $\gamma$. For any path $P$, let its length $\ell(P)$ be the number of points that it contains. Notice that if $P$ has signature $\gamma=(\gamma_1,\ldots,\gamma_m)$, then clearly $\ell(P)\geq m$. 
\begin{definition}
For $\pi\in\{1234,2143\}$, $k\geq0$, $q\geq1$, $\gamma\in\Z^m$ with $m\geq1$, define
$$F^{\pi}(k,q,\gamma):=\sum_{P\in\P^{\pi}_{k,q,\gamma}}t^{\ell(P)-m}.$$
\end{definition}
We are going to recursively compute $F^{\pi}(k,q,\gamma)$ and then compare $F^{1234}(k,q,\gamma)$ with $F^{2143}(k,q,\gamma)$. As for some notations, if $\gamma\in\Z^m$, write $|\gamma|=m$. For convention, we say $F^{\pi}(k,q,\gamma)=0$ if $q\leq0$ or $|\gamma|=0$. Let $\gamma'=(\gamma_2,\gamma_3,\ldots,\gamma_m)$, which is $\emptyset$ if $m=1$ and let $\gamma''=(\gamma_3,\ldots,\gamma_m)$, which is $\emptyset$ if $m\leq2$. And we will restrict our attention to only those $\gamma$'s that can be signatures of some valid paths in $\P^{\pi}$. Namely, we require $2\leq\gamma_{i+1}\leq\gamma_i+1$. Finally, for simplicity, let $s=1+t+t^2+\cdots=1/(1-t)$.
\begin{lemma}\label{lem:2143gen}
For $k\geq0$, $q\geq1$, $\gamma\in\Z^m$ with $m\geq1$, we have
\begin{equation*}
F^{2143}(k,q,\gamma)=\begin{cases}
s^k &|\gamma|=1,\\
F^{2143}(0,q{-}1,\gamma)+F^{2143}(\gamma_1{+}1{-}\gamma_2,q,\gamma') &|\gamma|\geq2,k=0,\\
sF^{2143}(k{-}1,q,\gamma)+sF^{2143}(\gamma_1{+}1{-}\gamma_2{+}k,q,\gamma')\\
-sF^{2143}(\gamma_1{-}\gamma_2{+}k,q,\gamma') &|\gamma|\geq2,k\geq1.
\end{cases}
\end{equation*}
\end{lemma}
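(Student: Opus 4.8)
The plan is to prove the identity combinatorially: classify each path $P\in\P^{2143}_{k,q,\gamma}$ by its first edge, read the options off the succession rule of Proposition~\ref{prop:suc2143}, and then massage the resulting relations into the stated form.

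First I would dispose of the base case $|\gamma|=1$. A path with no recorded edges uses only non-recorded edges, and the non-recorded successors of $(x,y,z)$ are exactly the vertices $(x,i,z)$ with $x+1\le i\le y$; hence such a path stays in layer $q$, never changes its first coordinate away from $\gamma_1$, and its sequence of second coordinates is an arbitrary weakly decreasing sequence of integers starting at $\gamma_1+k$ and staying $\ge\gamma_1+1$. Weighting each such path by $t^{\ell(P)-1}$ gives $\sum_{r\ge1}\binom{k+r-2}{r-1}t^{r-1}=(1-t)^{-k}=s^{k}$; equivalently one checks $F^{2143}(k,q,(\gamma_1))=sF^{2143}(k-1,q,(\gamma_1))$ with $F^{2143}(0,q,(\gamma_1))=1$.

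Now suppose $|\gamma|\ge2$, so every path has a first edge out of $(\gamma_1,\gamma_1+k,q)$, and by the succession rule it is of exactly one of three types: (i) a recorded edge inside layer $q$, forced to go to $(\gamma_2,\gamma_1+k+1,q)$; (ii) a non-recorded edge, going to $(\gamma_1,\gamma_1+k',q)$ for some $1\le k'\le k$ (possible only when $k\ge1$); (iii) a recorded ``drop'' edge, going to $(\gamma_2,\gamma_1+1,z')$ for some $1\le z'\le q-1$. In case (i), deleting the first edge is a bijection with $\P^{2143}_{\gamma_1+k+1-\gamma_2,\,q,\,\gamma'}$ under which the weight $t^{\ell(P)-|\gamma|}$ is unchanged (the drop in $\ell$ is offset by the drop in $|\gamma|$), contributing $F^{2143}(\gamma_1+k+1-\gamma_2,q,\gamma')$. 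In case (ii), deleting the first edge is a bijection with $\bigsqcup_{k'=1}^{k}\P^{2143}_{k',q,\gamma}$ (using that a non-recorded edge changes neither the first coordinate nor the signature) with $\ell$ dropping by one, contributing $t\sum_{k'=1}^{k}F^{2143}(k',q,\gamma)$. Case (iii) is the crux: the drop edges out of $(\gamma_1,y,q)$ are governed by $suc(\gamma_1,\gamma_1,q-1)$ and hence do not depend on $y$, so replacing the source $(\gamma_1,\gamma_1+k,q)$ by $(\gamma_1,\gamma_1,q-1)$ while keeping the first edge's target and all subsequent steps unchanged is a bijection between the paths in $\P^{2143}_{k,q,\gamma}$ whose first edge is a drop and all of $\P^{2143}_{0,q-1,\gamma}$, preserving $\ell$ and signature, and contributing $F^{2143}(0,q-1,\gamma)$. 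Adding the three contributions gives the stated formula when $k=0$; when $k\ge1$ it gives, after moving the $k'=k$ term to the left, $(1-t)F^{2143}(k,q,\gamma)=F^{2143}(0,q-1,\gamma)+F^{2143}(\gamma_1+k+1-\gamma_2,q,\gamma')+t\sum_{k'=1}^{k-1}F^{2143}(k',q,\gamma)$.

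Finally, for $k\ge1$ I would turn this relation into the stated form by subtracting it at $k-1$ from itself at $k$: for $k\ge2$ the remaining sum telescopes, and dividing by $1-t$ leaves $F^{2143}(k,q,\gamma)=sF^{2143}(k-1,q,\gamma)+sF^{2143}(\gamma_1+1-\gamma_2+k,q,\gamma')-sF^{2143}(\gamma_1-\gamma_2+k,q,\gamma')$, while the case $k=1$ follows after substituting the already-established $k=0$ formula. Along the way one must respect the conventions $F^{2143}(k,q,\gamma)=0$ for $q\le0$ or $|\gamma|=0$ and the degenerate cases they cover (e.g.\ $q=1$, where no drop edge exists and indeed $F^{2143}(0,0,\gamma)=0$) and the standing restriction $2\le\gamma_{i+1}\le\gamma_i+1$ on valid signatures. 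I expect the main obstacle to be organizing the three-way first-edge case analysis cleanly and, above all, checking the drop-edge bijection of case (iii); the remaining manipulations are routine.
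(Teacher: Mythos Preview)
Your proposal is correct and follows essentially the same approach as the paper: both argue by classifying the first edge of a path according to the succession rule of Proposition~\ref{prop:suc2143}, and both rely on the key observation that the drop successors of $(\gamma_1,\gamma_1+k,q)$ coincide with all successors of $(\gamma_1,\gamma_1,q-1)$ (your case~(iii) bijection). The only organizational difference is in the $k\ge 1$ case: you first write down the full first-edge recursion $F^{2143}(k,q,\gamma)=F^{2143}(0,q-1,\gamma)+F^{2143}(\gamma_1+k+1-\gamma_2,q,\gamma')+t\sum_{k'=1}^{k}F^{2143}(k',q,\gamma)$ and then telescope algebraically, whereas the paper compares $F^{2143}(k,q,\gamma)$ with $sF^{2143}(k-1,q,\gamma)$ directly and combinatorially (interpreting the factor $s$ as a string of self-loops at $(\gamma_1,\gamma_1+k,q)$ and observing that the two families of paths ``largely coincide'' except for the first recorded in-layer edge). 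These are two presentations of the same telescoping; yours is slightly more mechanical, the paper's slightly more direct.
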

\begin{proof}
We refer the readers to the succession rule in Proposition~\ref{prop:suc2143}.

If $|\gamma|=1$, then the signature has length 1 and we are summing over paths that start at $(\gamma_1,\gamma_1+k,q)$ with no recorded edges. As soon as we decrease $q$, which is the $z$-coordinate, we need to use the succession rule for $(\gamma_1,\gamma_1,q-1)$ and then every edge is recorded so we cannot have any edges afterwards. Therefore, the only additional points on this path come from any number of $(\gamma_1,\gamma_1{+}k,q)$ followed by any number of $(\gamma_1,\gamma_1{+}k{-}1,q)$ and so on, finally ending with any number of $(\gamma_1,\gamma_1{+}1,q)$. The resulting generating function is then $(1+t+t^2+\cdots)^k=s^k$.

If $k=0$, then our paths start at $(\gamma_1,\gamma_1,q)$. The next edge must be recorded. There are exactly two options: either decrease the $z$-coordinate $q$, or go directly to the next signature value $\gamma_2$ at the same $z$-coordinate. For the first option, we obtain a generating function $F^{2143}(0,q-1,\gamma)$. For the second option, we go from $(\gamma_1,\gamma_1,q)$ to $(\gamma_2,\gamma_1{+}1,q)$ and trim the signature so the corresponding generating function is $F^{2143}(\gamma_1{+}1{-}\gamma_2,q,\gamma')$.

The main case is $|\gamma|\geq2$ and $k\geq1$. Our goal is to decrease $k$. As $k\geq1$, when we start at $(\gamma_1,\gamma_1{+}k,q)$, we are allowed to have an arbitrary number of $(\gamma_1,\gamma_1{+}k,q)$ first via unrecorded edges, which provide a factor of $s$, before we choose the next edge. Let's now compare $F^{2143}(k,q,\gamma)$ with $sF^{2143}(k{-}1,q,\gamma)$. The paths enumerated by each of them largely coincide, including those that decrease $q$ right away. The only exception is that paths that go directly from some number of $(\gamma_1,\gamma_1{+}k,q)$ to the next recorded edge ending at $(\gamma_2,\gamma_1{+}k{+}1,q)$ are counted by $F^{2143}(k,q,\gamma)$ but not by $sF^{2143}(k{-}1,q,\gamma)$; and similarly the paths that go directly to $(\gamma_2,\gamma_1{+}k,q)$ from $(\gamma_1,\gamma_1{+}k{-}1,q)$ are counted only by $sF^{2143}(k{-}1,q,\gamma)$. As a result,
\begin{align*}
&F^{2143}(k,q,\gamma)-sF^{2143}(k{-}1,q,\gamma)\\
=&sF^{2143}(\gamma_1{+}1{-}\gamma_2{+}k,q,\gamma')-sF^{2143}(\gamma_1{-}\gamma_2{+}k,q,\gamma')
\end{align*}
which is equivalent to the statement that we need.
\end{proof}

Notice that the recursive formula provided in Lemma~\ref{lem:2143gen} can determine $F^{2143}$ uniquely.

\begin{lemma}\label{lem:1234gen}
For $k\geq0$, $q\geq1$, $\gamma\in\Z^m$ with $m\geq1$, we have
\begin{equation*}
F^{1234}(k,q,\gamma)=\begin{cases}
s^k &|\gamma|=1,\\
F^{2143}(k,q,\gamma) &q=1,\\
F^{1234}(k,q{-}1,\gamma)+F^{1234}(\gamma_1{+}1{-}\gamma_2{+}k,q,\gamma') &|\gamma|\geq2,q\geq2.
\end{cases}
\end{equation*}
\end{lemma}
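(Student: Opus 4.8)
The plan is to mirror the proof of Lemma~\ref{lem:2143gen}: read the successors off Proposition~\ref{prop:suc1234} and split every path on its first edge. The structural fact I will lean on is that, unwinding the recursive succession rule, from a point $(x,y,z)$ one has
$$suc(x,y,z)=\bigcup_{z'=1}^{z}\{(2,y{+}1,z'),\ldots,(x{+}1,y{+}1,z')\}\ \cup\ \{(x,x{+}1,1),\ldots,(x,y,1)\},$$
so every successor lying at a layer $\ge 2$ has $y$-coordinate $y{+}1$ (hence is reached by a recorded edge), while the only unrecorded successors form the last block, which lies at layer $1$ and keeps the $x$-coordinate.

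For $|\gamma|=1$, the path out of $(\gamma_1,\gamma_1{+}k,q)$ has no recorded edge, so (if $q\ge2$) its first step must fall into that last block, landing at layer $1$, and thereafter it stays at layer $1$; in every case the $y$-coordinates strictly after the starting vertex form a weakly decreasing word with letters in $\{\gamma_1{+}1,\ldots,\gamma_1{+}k\}$, and conversely every such word occurs. Weighting a word of length $j$ by $t^{j}$ gives $\sum_{j\ge0}\binom{j+k-1}{k-1}t^{j}=(1-t)^{-k}=s^{k}$. For $q=1$, evaluating the succession rule of Proposition~\ref{prop:suc1234} at $z=1$ returns exactly the succession rule of Proposition~\ref{prop:suc2143} at $z=1$ (its $suc(x,x,0)$ summand is empty), and since a path born at layer $1$ never leaves layer $1$ in either generating tree, $\P^{1234}$ and $\P^{2143}$ have the same set of paths starting at layer-$1$ points; on such paths the two notions of ``recorded'' also agree (the clause $z_1>z_2$ is vacuous), so signatures and lengths match and $F^{1234}(k,1,\gamma)=F^{2143}(k,1,\gamma)$.

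The substantive case is $|\gamma|\ge 2$ and $q\ge 2$. Here the path has at least one edge, so I split $\P^{1234}_{k,q,\gamma}$ according to whether its first edge remains at layer $q$ or strictly decreases the layer. If it remains at layer $q$, then by the structural fact its endpoint has $y$-coordinate $\gamma_1{+}k{+}1$ and the edge is recorded, forcing the endpoint to be $(\gamma_2,\gamma_1{+}k{+}1,q)$ (a legal successor since $2\le\gamma_2\le\gamma_1{+}1$); deleting the starting vertex is then a bijection onto $\P^{1234}_{\gamma_1+1-\gamma_2+k,\,q,\,\gamma'}$ that drops both $\ell$ and $|\gamma|$ by one, so this block contributes $F^{1234}(\gamma_1{+}1{-}\gamma_2{+}k,q,\gamma')$. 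If the first edge strictly decreases the layer, its endpoint lies in $suc(\gamma_1,\gamma_1{+}k,q{-}1)$ — precisely the admissible first steps out of $(\gamma_1,\gamma_1{+}k,q{-}1)$ — so relabeling the layer of the initial vertex from $q$ to $q{-}1$ is a length- and signature-preserving bijection onto all of $\P^{1234}_{k,q-1,\gamma}$ (the first edge's recorded status depends only on whether $y_2=y_1{+}1$, not on the source layer, and no later vertex is changed), contributing $F^{1234}(k,q{-}1,\gamma)$. Summing the two disjoint, exhaustive blocks gives the claimed recursion.

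I expect the bookkeeping of this last bijection to be the only delicate point: one must verify that the layer-relabeling sends valid paths to valid paths in both directions, that it changes neither the signature (the initial $x$-coordinate and all recorded endpoints are untouched) nor the length, and — using that a path with $|\gamma|\ge 2$ cannot consist of a single vertex — that ``remains at layer $q$'' and ``strictly decreases the layer'' genuinely partition $\P^{1234}_{k,q,\gamma}$.
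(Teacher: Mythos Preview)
Your proof is correct and follows essentially the same approach as the paper: splitting each path in $\P^{1234}_{k,q,\gamma}$ according to whether the first edge stays at layer $q$ (contributing $F^{1234}(\gamma_1{+}1{-}\gamma_2{+}k,q,\gamma')$) or drops below it (contributing $F^{1234}(k,q{-}1,\gamma)$ via the relabeling bijection, using that $suc(x,y,z)\setminus\{\text{layer-}z\text{ block}\}=suc(x,y,z{-}1)$). Your bookkeeping is in fact a bit more explicit than the paper's---in particular, your handling of the $|\gamma|=1$ case via weakly decreasing words is cleaner than the paper's informal ``decrease $q$ all the way down to $1$'', and you spell out why the layer-relabeling bijection is surjective (no single-vertex paths when $|\gamma|\ge 2$) and signature-preserving (recordedness depends only on $y_2=y_1{+}1$), points the paper leaves implicit.
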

\begin{proof}
We refer the readers to the succession rule in Proposition~\ref{prop:suc1234}

If $|\gamma|=1$, then we are considering paths that start at $(\gamma_1,\gamma_1+k,q)$ with no recorded edges. Since every edge is recorded when $q\geq2$, our only option is to decrease $q$ all the way down to 1 and then use the succession rule of $(\gamma_1,\gamma_1+k,1)$. Now we can have an arbitrary number of $(\gamma_1,\gamma_1{+}k,1)$ followed by an arbitrary number of $(\gamma_1,\gamma_1{+}k{-}1,1)$ and so on up to an arbitrary number of $(\gamma_1,\gamma_1{+}1,1)$. The generating function is thus $(1+t+t^2+\cdots)^k=s^k$.

If $q=1$, the succession rules for $\P^{1234}$ and $\P^{2143}$ are the same so we have $\P^{1234}_{k,1,\gamma}=\P^{2143}_{k,1,\gamma}$. Therefore, $F^{1234}(k,1,\gamma)=F^{2143}(k,1,\gamma)$.

When $q\geq2$ and $|\gamma|\geq2$, for a path in $\P^{1234}_{k,q,\gamma}$, it starts at $(\gamma_1,\gamma_1{+}k,q)$. Since $q\geq2$, all edges that keep the same $z$-coordinate $q$ are recorded. So we have exactly two options: decrease $q$ by 1, which results in the generating function $F^{1234}(k,q{-}1,\gamma)$, and go to $(\gamma_2,\gamma_1+1,q)$ indicated by the signature $\gamma$, which results in the generating function $F^{1234}(\gamma_1{+}1{-}\gamma_2{+}k,q,\gamma')$. Take the sum and we get the desired equation.
\end{proof}

With sufficient tools to determine the generating functions $F^{2143}$ and $F^{1234}$, we are ready to obtain their equality.

\begin{lemma}\label{lem:2143gen=1234gen}
For $k\geq0$, $q\geq1$, $\gamma\in\Z^m$ with $m\geq1$, $$F^{1234}(k,q,\gamma)=F^{2143}(k,q,\gamma).$$
\end{lemma}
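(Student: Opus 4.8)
The plan is to prove Lemma~\ref{lem:2143gen=1234gen} by induction, using the recursions of Lemma~\ref{lem:2143gen} and Lemma~\ref{lem:1234gen} together with the fact (noted just after Lemma~\ref{lem:2143gen}) that the recursion for $F^{2143}$ determines it uniquely. Concretely, I would show that $F^{1234}$ satisfies the \emph{same} recursion as $F^{2143}$, i.e.\ that
\begin{align*}
F^{1234}(0,q,\gamma)&=F^{1234}(0,q{-}1,\gamma)+F^{1234}(\gamma_1{+}1{-}\gamma_2,q,\gamma'),\\
F^{1234}(k,q,\gamma)&=sF^{1234}(k{-}1,q,\gamma)+sF^{1234}(\gamma_1{+}1{-}\gamma_2{+}k,q,\gamma')-sF^{1234}(\gamma_1{-}\gamma_2{+}k,q,\gamma')
\end{align*}
for $|\gamma|\ge 2$, $k\ge 1$ in the second line; once this is verified, the two functions agree by the uniqueness remark (they also trivially agree in the base cases $|\gamma|=1$, where both equal $s^k$, and when $q\le 0$ or $|\gamma|=0$, where both are $0$).

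The natural induction is on the pair $(q,|\gamma|)$ in lexicographic order (or equivalently a double induction: outer on $|\gamma|$, inner on $q$). The key identity to establish is that
\[
F^{1234}(k,q,\gamma)-sF^{1234}(k{-}1,q,\gamma)=sF^{1234}(\gamma_1{+}1{-}\gamma_2{+}k,q,\gamma')-sF^{1234}(\gamma_1{-}\gamma_2{+}k,q,\gamma')
\]
for $k\ge 1$, $|\gamma|\ge 2$. To prove this, I would first handle the case $q=1$: there Lemma~\ref{lem:1234gen} gives $F^{1234}(k,1,\gamma)=F^{2143}(k,1,\gamma)$, so the identity follows from the $k\ge1$ case of Lemma~\ref{lem:2143gen}. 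For $q\ge 2$, I would expand both $F^{1234}(k,q,\gamma)$ and $F^{1234}(k{-}1,q,\gamma)$ using the third case of Lemma~\ref{lem:1234gen}, getting
\[
F^{1234}(k,q,\gamma)-sF^{1234}(k{-}1,q,\gamma)=\big[F^{1234}(k,q{-}1,\gamma)-sF^{1234}(k{-}1,q{-}1,\gamma)\big]+\big[F^{1234}(\alpha{+}k,q,\gamma')-sF^{1234}(\alpha{+}k{-}1,q,\gamma')\big],
\]
where $\alpha=\gamma_1{+}1{-}\gamma_2$. The first bracket is handled by the inductive hypothesis at level $q-1$ (same $|\gamma|$), and the second bracket is handled either by the inductive hypothesis at shorter signature length (if $|\gamma'|\ge2$, apply the just-proved recursion form to $\gamma'$), or by the $|\gamma'|=1$ base case where $F^{1234}(\alpha{+}k,q,\gamma')-sF^{1234}(\alpha{+}k{-}1,q,\gamma')=s^{\alpha+k}-s\cdot s^{\alpha+k-1}=0$, matching the right-hand side since then $\gamma''=\emptyset$ forces the RHS to vanish as well. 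Collecting terms and using $\gamma'_1=\gamma_2$, $\gamma'_2=\gamma_3$ (so that $\gamma'_1{+}1{-}\gamma'_2 = \gamma_2+1-\gamma_3$), a short telescoping computation should reproduce exactly $sF^{1234}(\gamma_1{+}1{-}\gamma_2{+}k,q,\gamma')-sF^{1234}(\gamma_1{-}\gamma_2{+}k,q,\gamma')$. I would also separately check the $k=0$ recursion, which is cleaner: the third case of Lemma~\ref{lem:1234gen} for $q\ge2$ literally reads $F^{1234}(0,q,\gamma)=F^{1234}(0,q{-}1,\gamma)+F^{1234}(\gamma_1{+}1{-}\gamma_2,q,\gamma')$, which is already the desired $k=0$ form of the $F^{2143}$ recursion, and for $q=1$ it follows from Lemma~\ref{lem:1234gen}'s second case plus Lemma~\ref{lem:2143gen}.

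I expect the main obstacle to be bookkeeping rather than conceptual: making sure the induction is well-founded (the term $F^{1234}(\alpha+k,q,\gamma')$ has a strictly shorter signature, and $F^{1234}(k,q-1,\gamma)$ has strictly smaller $q$ but the \emph{same} $|\gamma|$, so lexicographic order on $(|\gamma|,q)$ with $|\gamma|$ dominant works), and carefully tracking the index shifts $\gamma_1{+}1{-}\gamma_2{+}k$ versus $\gamma_1{-}\gamma_2{+}k$ through the recursion so that the telescoping is exact. One subtlety worth stating explicitly is why it suffices to verify that $F^{1234}$ obeys the $F^{2143}$-recursion: since that recursion expresses $F^{2143}(k,q,\gamma)$ in terms of values with smaller $|\gamma|$, or the same $|\gamma|$ but smaller $q$ (and smaller $k$), and bottoms out at the unambiguous base cases, any two solutions must coincide. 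A clean way to present this is to phrase the whole argument as a single induction proving $F^{1234}(k,q,\gamma)=F^{2143}(k,q,\gamma)$ directly, invoking Lemma~\ref{lem:2143gen} on the right side and Lemma~\ref{lem:1234gen} on the left, rather than first proving the recursion abstractly and then invoking uniqueness; the two formulations are equivalent but the direct one may be shorter to write.
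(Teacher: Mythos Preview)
Your proposal is correct and is essentially the same argument as the paper's: the paper performs a direct induction on $(|\gamma|,q,k)$ proving $F^{1234}(k,q,\gamma)=F^{2143}(k,q,\gamma)$, expanding the $F^{2143}$ side via Lemma~\ref{lem:2143gen}, applying Lemma~\ref{lem:1234gen} to each resulting term, and regrouping---which is exactly the computation underlying your ``show $F^{1234}$ satisfies the $F^{2143}$-recursion'' plan (and you yourself note the two formulations are equivalent). The only cosmetic differences are that the paper inducts on $k$ as well rather than proving the identity for all $k$ simultaneously, and that your ``short telescoping computation'' is in fact the same six-term expand-and-regroup displayed explicitly in the paper's proof.
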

\begin{proof}
We proceed by induction on $|\gamma|$, $q$ and $k$ in this order. From Lemma~\ref{lem:2143gen} and Lemma~\ref{lem:1234gen}, our statement is true when $|\gamma|=1$ and is also true when $|\gamma|\geq2$ and $q=1$. When $|\gamma|\geq2$, $q\geq2$ and $k=0$, from Lemma~\ref{lem:2143gen},
$$F^{2143}(0,q,\gamma)=F^{2143}(0,q-1,\gamma)+F^{2143}(\gamma_1+1-\gamma_2,q,\gamma')$$
and from Lemma~\ref{lem:1234gen},
$$F^{1234}(0,q,\gamma)=F^{1234}(0,q-1,\gamma)+F^{1234}(\gamma_1+1-\gamma_2,q,\gamma')$$
so by induction hypothesis, $F^{2143}(0,q,\gamma)=F^{1234}(0,q,\gamma)$.

Now assume that $|\gamma|\geq2$, $q\geq2$ and $k\geq1$. With induction hypothesis and for the ease of notation, for the arguments that we already know the equality of $F^{1234}$ and $F^{2143}$, we will just write $F$ instead. By Lemma~\ref{lem:2143gen} and Lemma~\ref{lem:1234gen}, and by induction hypothesis, we have that
\begin{align*}
F^{2143}(k,q,\gamma)=&sF(k{-}1,q,\gamma)+sF(\gamma_1{+}1{-}\gamma_2{+}k,q,\gamma')-sF(\gamma_1{-}\gamma_2{+}k,q,\gamma')\\
=&sF(k{-}1,q{-}1,\gamma)+sF(\gamma_1{-}\gamma_2{+}k,q,\gamma')\\
&+sF(\gamma_1{+}1{-}\gamma_2{+}k,q{-}1,\gamma')+sF(\gamma_1{+}2{-}\gamma_3{+}k,q,\gamma'')\\
&-sF(\gamma_1{-}\gamma_2{+}k,q{-}1,\gamma')-sF(\gamma_1{+}1{-}\gamma_3{+}k,q,\gamma'')\\
=&F(k,q{-}1,\gamma)+F(\gamma_1{+}1{-}\gamma_2{+}k,q,\gamma')\\
=&F^{1234}(k,q,\gamma)
\end{align*}
as desired. We also see that the above argument goes through when $|\gamma'|=1$, in which case $\gamma''=\emptyset$. Therefore, the induction step is established so we obtain the desired lemma.
\end{proof}

With the main technical lemma (Lemma~\ref{lem:2143gen=1234gen}), Theorem~\ref{thm:main} becomes immediate.
\begin{proof}[Proof of Theorem~\ref{thm:main}]
For $\pi\in\{1234,2143\}$ and $j\leq n$, 
$$B_n^j(\pi)=\sum_{\gamma_1=j+1}[t^{n-j-|\gamma|+1}]F^{\pi}(0,j+1,\gamma).$$
Since $F^{1234}(0,j+1,\gamma)=F^{2143}(0,j+1,\gamma)$, $B_n^j(1234)=B_n^j(2143)$.
\end{proof}
\section{Open questions}\label{sec:open}

There are still many interesting questions to be asked. 

Firstly, the proof provided in Section~\ref{sec:proof} is semi-bijective. With recursive formulas provided in Lemma~\ref{lem:2143gen} and Lemma~\ref{lem:1234gen}, we are able to obtain the equality of $F^{1234}(k,q,\gamma)=F^{2143}(k,q,\gamma)$. However, is there an explicit bijection between paths in $\P^{1234}_{k,q,\gamma}$ and $\P^{2143}_{k,q,\gamma}$ that is length-preserving? 

Secondly, for a fixed $j\geq0$, it is desirable to obtain an explicit formula for the generating function $$\sum_{n=j}^{\infty}B_n^j(\pi)t^{n-j}$$ for either $\pi\in\{1234,2143\}$. The case $j=0$ is the generating function for 1234 (or 2143) avoiding permutations $\sum_{n}S_n(1234)t^n$, which is studied in \cite{bousquet2002four} and already has a complicated form.

Thirdly, can our techniques be further generalized? We make the following conjecture.
\begin{conjecture}\label{conj:12345}
For $j\leq n$, $|B_n^j(12345)|=|B_n^j(21354)|$.
\end{conjecture}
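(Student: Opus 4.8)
The plan is to mirror, for the length-$5$ patterns, the four-stage architecture that proves Theorem~\ref{thm:main}. The first observation is that $12345$ and $21354$ are the genuine length-$5$ analogues of $1234$ and $2143$: writing $\oplus$ for direct sum of patterns, $1234 = 1\oplus 1\oplus 1\oplus 1$ versus $2143 = 21\oplus 21$, and likewise $12345 = 1\oplus 1\oplus 1\oplus 1\oplus 1$ versus $21354 = 21\oplus 1\oplus 21$. Crucially, both new patterns still equal their own reverse complements (one checks directly that the reverse complement of $21354$ is $21354$), which is exactly the symmetry exploited in Lemma~\ref{lem:ignore}. So the first step is to establish the length-$5$ analogues of Lemma~\ref{lem:struc} and Lemma~\ref{lem:ignore}: that an occurrence of either pattern can be taken to avoid the bottom right quadrant, and that the top right / bottom left quadrants are forced into a restricted shape. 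Using the antipodal symmetry $w(-i)=-w(i)$, the forced shape turns out to be clean, though no longer monotone. For $12345$, the combined top-right/bottom-left point set has longest increasing subsequence equal to twice that of the top-right points alone, so avoidance forces the top right quadrant points to avoid $123$. For $21354$, an analogous split analysis across the two coordinate axes shows that avoidance forces the top right quadrant points to avoid $132$. Both conditions are Catalan-counted, which is reassuring for the plausibility of the conjecture.

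Next I would build the insertion trees $BT^j(12345)$ and $BT^j(21354)$ exactly as before, inserting elements into the top left quadrant in increasing order of image, and search for succession rules analogous to Proposition~\ref{prop:suc2143} and Proposition~\ref{prop:suc1234}. The key difficulty is that the layer statistic $z$ no longer controls the horizontal geometry: because the top right quadrant points are now only $123$-avoiding (two decreasing runs) or $132$-avoiding rather than monotone, the horizontal positions of the points bounding successive layers do not vary monotonically with height. Consequently the three statistics $(x,y,z)$ will not close up, and I expect to need labels in $\Z^d$ with $d=4$ or $5$, tracking a second ``sites before an ascent/descent'' quantity together with enough information about the horizontal positions of the not-yet-passed top-right points. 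Computing these succession rules---determining which active sites survive an insertion, and verifying via the analogue of Lemma~\ref{lem:activesites} that every deactivated site records a forced occurrence of the pattern---is the combinatorial heart of the argument.

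With succession rules in hand, I would translate each tree into a family of lattice paths, define a higher-dimensional analogue of the signature recording the relevant ``$x$-type'' coordinates at the path start and at each recorded edge, and introduce generating functions $F^{12345}$ and $F^{21354}$ graded by path length, in parallel with $F^{1234}$ and $F^{2143}$. The final step would derive recursions for these generating functions, as in Lemma~\ref{lem:2143gen} and Lemma~\ref{lem:1234gen}, and prove $F^{12345}=F^{21354}$ by a nested induction on the number of signature entries, then on the layer coordinates, then on the remaining statistics. The conjecture would then follow by extracting the appropriate coefficient, exactly as in the proof of Theorem~\ref{thm:main} from Lemma~\ref{lem:2143gen=1234gen}.

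The hard part, and the reason this is only conjectured, is twofold. First, identifying the correct minimal set of statistics so that the two succession functions close into clean recursions is genuinely nonobvious: the paper already notes that even in the length-$4$ case the two trees are ``far from isomorphic,'' and the extra dimension coming from the non-monotone top right quadrant makes this mismatch worse, so a naive choice of statistics will fail to yield matching recursions. Second, the inductive comparison in Lemma~\ref{lem:2143gen=1234gen} succeeded because, after one expansion, the two recursions collapsed to the same three-term identity with a clean telescoping cancellation; with more statistics the recursions carry more terms, and whether a comparable telescoping survives depends delicately on choosing the signature so that the surplus terms pair off. I would first verify the conjecture computationally for small $n$ to pin down the exact statistics and succession rules before attempting the generating-function comparison.
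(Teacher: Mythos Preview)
The paper does not prove this statement. Conjecture~\ref{conj:12345} appears in Section~\ref{sec:open} (Open questions) precisely as an open conjecture, with the only evidence offered being a computer check for $n\le 7$ and the easy observations that the $j=0$ case is West's classical Wilf equivalence and the $j=n$ case gives Catalan numbers on both sides. There is therefore no ``paper's own proof'' to compare your proposal against.

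Your write-up is, appropriately, not a proof either but a strategy outline, and you are candid about this in the final paragraph. The preliminary observations are sound: $21354$ is indeed its own reverse complement, so the symmetry behind Lemma~\ref{lem:ignore} is available; and the forced shapes in the top right quadrant ($123$-avoiding for $12345$, $132$-avoiding for $21354$) are correct and indeed Catalan-counted, matching the $j=n$ check the paper mentions. The plan to build generating trees, find a larger label set, define a signature, and compare generating functions by induction is the natural extension of the paper's method and is exactly the direction the paper gestures toward when it asks ``can our techniques be further generalized?''

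That said, you should be clear that nothing here constitutes a proof, and the paper would not accept it as one. The two obstacles you name---finding statistics for which the succession rules close, and arranging a telescoping in the generating-function comparison---are the genuine content of the problem, and neither you nor the paper resolves them. In particular, because the top-right quadrant is no longer a single monotone run, the ``layer'' structure that drives Propositions~\ref{prop:suc2143} and~\ref{prop:suc1234} fragments, and it is not at all obvious that a finite label set suffices; this is the step at which the argument could simply fail rather than merely become more complicated.
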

We have checked Conjecture~\ref{conj:12345} for $n\leq 7$. Notice that when $j=0$, the statement holds \cite{west1990permutations} and when $j=n$, it is not hard to see that both sides equal the Catalan number $C_j$.

More generally, it is known that the identity element $1,2,\ldots,k$ and $\pi=2,1,3,\ldots$, $k{-}2,k,k{-}1$ are Wilf equivalent in the sense of permutations \cite{west1990permutations}. So are they Wilf equivalent in signed permutations?

Finally, we note that some results in this paper fit nicely into the framework of $\mathrm{st}$-Wilf equivalence introduced by Sagan and Savage \cite{sagansavage}. In the classical case of the symmetric group $S_n$ and a permutation statistic $\mathrm{st}\colon S_n\to \mathcal{S}$ (where $\mathcal{S}$ is some set), the patterns $\pi_1,\pi_2$ are said to be $\mathrm{st}$-Wilf equivalent if for all $n\geq 1$ and $\sigma\in \mathcal{S}$, the number of $w\in S_n(\pi_1)$ with $\mathrm{st}(w)=\sigma$ is equal to the analogous number for $\pi_2$. The analogous definition for $B_n$ is clear; furthermore, our Theorem \ref{thm:main} and Lemma \ref{lem:2143gen=1234gen} can be restated as instances of $\mathrm{st}$-Wilf equivalence. But there are many other important permutation statistics on $B_n$, and proving corresponding $\mathrm{st}$-Wilf equivalences for permutation patterns in signed permutations is an interesting direction for further investigation.

\section*{Acknowledgments}
This research was carried out as part of the 2019 Summer Program in Undergraduate Research (SPUR) of the MIT Mathematics Department. The authors thank Prof.\,Alex Postnikov for suggesting the project and Christian Gaetz, Prof.\,Ankur Moitra and Prof.\,David Jerison for helpful conversations.

\bibliographystyle{plain}
\bibliography{ref}
\end{document}